\newcommand{\bt}{\begin{Theorem}}
\newcommand{\et}{\end{Theorem}}
\newcommand{\bi}{\begin{itemize}}
\newcommand{\ei}{\end{itemize}}
\newcommand{\bea}{\begin{eqnarray}}
\newcommand{\ba}{\begin{array}}
\newcommand{\eea}{\end{eqnarray}}
\newcommand{\ea}{\end{array}}
\newtheorem{Definition}{Definition}[section]
\newtheorem{Theorem}[Definition]{Theorem}
\newtheorem{Lemma}[Definition]{Lemma}
\newtheorem*{theoremA*}{Theorem A}
\newtheorem*{theoremB*}{Theorem B}
\newtheorem*{Proofofmainthm*}{Proof of main theorem}
\newcommand{\be}{\begin{equation}}
\newcommand{\ee}{\end{equation}}
\newcommand{\wtilde}{\widetilde}%
\newcommand{\R}{\mathbb R}%
\newcommand{\C}{\mathbb C}%
\newcommand{\Z}{\mathbb Z}%
\newcommand{\N}{\mathbb N}%
\newcommand{\x}{\mathfrak X}%
\renewcommand{\Re}{\mbox{Re }}
\begin{document}
\baselineskip16pt

\author[Pratyoosh Kumar and Sumit Kumar Rano]{Pratyoosh Kumar and Sumit Kumar Rano }
\address{Department of Mathematics, Indian Institute of Technology Guwahati, 781039, India.
E-mail: pratyoosh@iitg.ac.in and s.rano@iitg.ac.in}

\title[$L^p$-type estimates of Poisson transform ]
{Analysis of $L^p$-type estimates of Poisson transform on Homogeneous Trees}
\subjclass[2000]{Primary 43A85 Secondary 39A12, 20E08}
\keywords{Homogeneous Tree, Fourier Analysis, Poisson transform, Eigenfunction}

\begin{abstract}  In this article we prove the restriction theorem for Helgason-Fourier transform on homogeneous tree. Our proof is based on the duality
argument and the norm estimates of Poisson transform. We also characterize all eigenfunctions of the laplacian
on homogeneous tree which are Poisson transform of $L^p$ functions defined on the boundary.
\end{abstract}

\maketitle

\section{Inroduction}
The formulation of Fourier restriction theorem on $\R^n$ ($n\geq 2$) emerges explicitly by the work of Stein. It says that $S^{n-1}$ the unit sphere
in $\R^n$ ($n\geq 2$) satisfies a $(p,q)$  \emph{restriction theorem} if
$$\left(\int_{S^{n-1}} |\mathcal{F}(f)(\xi)|^qd\sigma(\xi)\right)^{1/q}\leq C_{(p,q,n)}\|f\|_{L^{p}(\R^n)}$$
holds for each $f \in L^1\cap L^p$, where $\mathcal{F}(f)$ is the Fourier transform of $f$.

One of the celebrated result in this context is the Tomas-Stein restriction theorem. It says that the Fourier transform $\mathcal{F}(f)$ of a function
$f\in L^p(\R^n)$ has a well defined restriction on the unit sphere $S^{n-1}$ via the inequality,
$$\|\mathcal{F}(f)|_{S^{n-1}}\|_{L^{2}(S^{n-1})}\leq C_{p,n}\|f\|_{L^{p}(\R^n)}\quad\text{for all}~1\leq p\leq\frac{2n+2}{n+3}.$$

The similar results are also known for hyperbolic space, more generally rank one symmetric space of non compact type and its non symmetric
generalization Damek--Ricci spaces (see \cite{K,KRS,LR}).
In this article we prove the similar restriction theorem for Helgason-Fourier transform on homogeneous tree. The homogeneous tree may be considered as a discrete model of hyperbolic space.
 Many authors have extended the analogous results of harmonic analysis on this structure (see e.g. \cite{CS,CMS,CMS1,FTC,FTP2,SP,V}).

\subsection{Motivation and Statement of main result}
 Let $\x$ be a homogeneous tree of degree $q+1$ and $o$ be some fixed arbitrary reference point in $\x$. The boundary  of $\x$ is the set of all infinite geodesic rays starting at $o$ and is denoted by $\Omega$.  The Helgason-Fourier transform $\wtilde{f}$ of a finitely supported function $f$ on $\x$ is a function on $\mathbb C\times\Omega$ defined by the formula
\[\wtilde{f}(z,\omega)=\sum\limits_{x\in\x}f(x)p^{1/2+iz}(x,\omega)\]
where $p(x,\omega)$ is the Poisson kernel. For details of notations and definitions, we refer Section 2 and the references given there.


For fixed $z\in\C$, we say that  $\Omega$ the boundary of $\x$ satisfies a $(p,q)$  \emph{restriction theorem} if
\be \left(\int_{\Omega} |\wtilde{f}(z,\omega)|^qd\nu(\omega)\right)^{1/q}\leq C_{(p,q)}\|f\|_{L^{p}(\x)}\ee
holds for each $f \in L^p(\x)$. Before the formulation of main result for general functions let us first discuss the special case of radial function.
A simple calculation shows that if $f$ is a radial function then
\be \widetilde{f}(z,\omega)=\hat{f}(z)=\sum\limits_{x\in\mathfrak{X}}f(x)\varphi_{z}(x)\label{eq2}\ee where $\varphi_{z}$ is the spherical function.
For $p\in(1,\infty)$ define
$$\delta_p=\frac{1}{p}-\frac{1}{2}\;\;\; \text{and}\;\;\; S_p=\{z\in\C: |\Im z|\leq|\delta_p|\}.$$ Let $S_p^\circ,~\partial{S_p}$  be the usual interior and the boundary of $S_p$ respectively. Note that $\delta_p=-\delta_{p^\prime}$ and  $S_2=\R$. We define $\delta_1=-\delta_\infty=1/2$ and $S_1=\{z\in\C: |\Im z|\leq1/2\}.$

It is well known that $\varphi_{z}$ is bounded if and only if $z\in S_1.$ For other value of $p$ and for any $x\in\x$, we have the following pointwise estimates of $\varphi_{z}(x)$:
\begin{enumerate}
\item  For $1<p<2$, $|\varphi_{z}(x)|\asymp q^{-\frac{|x|}{p'}}$ if $\Im z=\delta_{p^{\prime}}$
\item For $p=2$, $|\varphi_{z}(x)|\asymp q^{-|x|/2}(1+|x|)$ if $z\in(\tau/2)\Z$ and
 $|\varphi_{z}(x)|\leq C_{z}q^{-|x|/2}$ if $z\in\R\setminus(\tau/2)\Z,$ where $\tau=\frac{2\pi}{\log q}.$
\end{enumerate}

The following theorem is a consequence of the above estimates.

\begin{Theorem}\label{sfestimate}
Let $1<p<2$. Then
\begin{enumerate}
\item[$1.$] $\varphi_z\in L^{p'}(\x)\quad\text{if and only if}\quad z\in S_p^\circ$
\item[$2.$] $\varphi_z\in L^{p',\infty}(\x)\quad\text{if and only if}\quad z\in S_p$
\item[$3.$] $\varphi_z\notin L^{2,\infty}(\x)$ if $z\in(\tau/2)\Z$ and $\varphi_z\in L^{2,\infty}(\x)$ if $z\in\R\setminus(\tau/2)\Z$.
\end{enumerate}
\end{Theorem}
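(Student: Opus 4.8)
The plan is to exploit the radiality of $\varphi_z$ (it depends only on $|x|$) to reduce every statement to an elementary estimate for the counting measure of the spheres in $\x$. Writing $s_n := \#\{x\in\x : |x|=n\}$, one has $s_0=1$ and $s_n=(q+1)q^{n-1}$ for $n\ge1$, so $s_n\asymp q^n$; hence for any radial $g$ we get $\|g\|_{L^{\rho}(\x)}^{\rho}=\sum_{n\ge0}s_n|g(n)|^{\rho}\asymp\sum_{n\ge0}q^n|g(n)|^{\rho}$ and the distribution function $d_g(t):=\#\{x:|g(x)|>t\}=\sum_{n:\,|g(n)|>t}s_n$. First I would record the two model computations that drive everything: if $|g(x)|\asymp q^{-\alpha|x|}$ with $\alpha>0$, then $\sum_n q^{n(1-\alpha\rho)}$ converges iff $\alpha\rho>1$, so $g\in L^{\rho}$ iff $\alpha\rho>1$; and since $d_g(t)\asymp q^{\log_q(1/t)/\alpha}=t^{-1/\alpha}$, the weak quasi-norm $\sup_t t\,d_g(t)^{1/\rho}\asymp\sup_t t^{1-1/(\alpha\rho)}$ is finite iff $\alpha\rho\ge1$, so $g\in L^{\rho,\infty}$ iff $\alpha\rho\ge1$. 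Thus the single number $\alpha\rho$ decides membership, with strong $L^{\rho}$ requiring the strict and weak $L^{\rho,\infty}$ the non-strict inequality.

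For parts (1) and (2) I would convert the pointwise estimates into a single decay exponent. For $0<|\Im z|<1/2$, choose $r\in(1,2)$ with $\delta_r=|\Im z|$ and apply estimate (1) for that $r$, using $\varphi_z=\varphi_{-z}$ to cover both signs of $\Im z$; since $1/r'=1/2-\delta_r$, this gives $|\varphi_z(x)|\asymp q^{-\alpha|x|}$ with $\alpha=1/2-|\Im z|$. For real $z$ estimate (2) gives $\alpha=1/2$, for $|\Im z|=1/2$ the function is bounded, and for $|\Im z|>1/2$ it grows. Taking $\rho=p'$ in the model computation, $\varphi_z\in L^{p'}$ iff $\alpha p'>1$ iff $|\Im z|<\delta_p$ iff $z\in S_p^\circ$, which is (1); and $\varphi_z\in L^{p',\infty}$ iff $\alpha p'\ge1$ iff $|\Im z|\le\delta_p$ iff $z\in S_p$, which is (2). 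The boundary $\partial S_p$ is exactly where $\alpha=1/p'$ and $\alpha p'=1$, so there $\varphi_z$ lies in $L^{p',\infty}$ but not in $L^{p'}$.

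For part (3) I set $p=2$ and use estimate (2). When $z\in\R\setminus(\tau/2)\Z$ the bound $|\varphi_z(x)|\le C_z q^{-|x|/2}$ gives $\alpha=1/2$, hence $\alpha\cdot2=1$ and $\varphi_z\in L^{2,\infty}$ by the model computation. When $z\in(\tau/2)\Z$ the extra factor forces $|\varphi_z(x)|\asymp(1+|x|)q^{-|x|/2}$, and here I would compute the distribution function directly: solving $(1+n)q^{-n/2}>t$ yields a top radius $N\approx 2\log_q(1/t)+2\log_q\log_q(1/t)$, so $d_{\varphi_z}(t)\asymp q^N\asymp t^{-2}(\log(1/t))^2$ and $\sup_t t\,d_{\varphi_z}(t)^{1/2}\asymp\sup_t\log(1/t)=\infty$; thus $\varphi_z\notin L^{2,\infty}$.

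The main obstacle is the borderline bookkeeping rather than any deep idea: the boundary $\partial S_p$ and the lattice $(\tau/2)\Z$ are precisely the critical cases, so I must track the distribution function to the precision that distinguishes $t^{-p'}$ (finite weak norm) from $t^{-2}(\log(1/t))^2$ (logarithmically divergent weak norm). I must also justify carefully that the interior estimates genuinely follow from the stated boundary estimate (1), by combining the evenness $\varphi_z=\varphi_{-z}$ with the reparametrization $\delta_r=|\Im z|$, $r\in(1,2)$, and handle $|\Im z|\ge1/2$ via boundedness on $S_1$ and growth beyond it. Everything else is the routine geometric-series and level-set computation summarized in the first paragraph.
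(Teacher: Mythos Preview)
Your proposal is correct and is exactly the elaboration the paper has in mind: the paper gives no detailed proof but simply declares the theorem ``a consequence of the above estimates,'' and your radial reduction to geometric series and level-set counting is precisely how one cashes that out. One small wording issue in the ``only if'' direction at $|\Im z|\ge 1/2$: you say $\varphi_z$ is ``bounded,'' but what the argument actually needs (and what the explicit expansion $\varphi_z(x)=\mathbf{c}(z)q^{(iz-1/2)|x|}+\mathbf{c}(-z)q^{(-iz-1/2)|x|}$ supplies, since $\mathbf{c}(-z)\neq 0$ there) is that $|\varphi_z|$ is bounded \emph{away from zero}, so that it lies in no $L^{p'}$ or $L^{p',\infty}$ with $p'<\infty$.
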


From equation (\ref{eq2}) and the estimates of $\varphi_z$, we have the following.
\begin{Theorem}
Let $1\leq p<2$. Then
\begin{enumerate}
\item[$1.$] $\hat{f}(z)$ exists if $f\in {L^{p}(\mathfrak{X})}^\sharp$ and $z\in S_p^\circ$
\item[$2.$] $\hat{f}(z)$ exists if $f\in {L^{p,1}(\mathfrak{X})}^\sharp$ and $z\in S_p$
\item[$3.$] $\hat{f}(z)$ exists if $f\in {L^{2,1}(\mathfrak{X})}^\sharp$ and $z\in\R\setminus(\tau/2)\Z$.
\end{enumerate}
\label{radial restriction}\end{Theorem}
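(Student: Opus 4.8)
The plan is to interpret the statement ``$\hat f(z)$ exists'' as absolute convergence of the series in (\ref{eq2}), i.e.\ $\sum_{x\in\x}|f(x)\varphi_z(x)|<\iy$, and then to derive each of the three assertions by pairing the decay of the radial function $f$ against the known decay of the spherical function $\varphi_z$ through a suitable Hölder inequality. In this way the whole argument reduces to feeding Theorem \ref{sfestimate} into (\ref{eq2}).

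For assertion $1$, when $1<p<2$ and $z\in S_p^\circ$ Theorem \ref{sfestimate}(1) gives $\varphi_z\in L^{p'}(\x)$, so the classical Hölder inequality yields
\[\sum_{x\in\x}|f(x)\varphi_z(x)|\le\|f\|_{L^p(\x)}\,\|\varphi_z\|_{L^{p'}(\x)}<\iy.\]
The endpoint $p=1$ lies outside the range of Theorem \ref{sfestimate} and I would treat it directly: since $S_1^\circ\subseteq S_1$ and $\varphi_z$ is bounded exactly on $S_1$, the series is dominated by $\|\varphi_z\|_{L^\iy(\x)}\|f\|_{L^1(\x)}$.

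Assertions $2$ and $3$ follow the same template but use the weak-type endpoint information. For assertion $2$ the condition $z\in S_p$ gives $\varphi_z\in L^{p',\iy}(\x)$ by Theorem \ref{sfestimate}(2), and for assertion $3$ the condition $z\in\R\setminus(\tau/2)\Z$ gives $\varphi_z\in L^{2,\iy}(\x)$ by Theorem \ref{sfestimate}(3). In both cases I would invoke the Hölder inequality for Lorentz spaces in the dual pairing $L^{p,1}$--$L^{p',\iy}$ (respectively $L^{2,1}$--$L^{2,\iy}$), obtaining a constant $C$ with
\[\sum_{x\in\x}|f(x)\varphi_z(x)|\le C\,\|f\|_{L^{p,1}(\x)}\,\|\varphi_z\|_{L^{p',\iy}(\x)}<\iy,\]
and the analogue with $p=2$. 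The case $p=1$ of assertion $2$ again follows from the boundedness of $\varphi_z$ on $S_1$, using $L^{1,1}(\x)=L^1(\x)$.

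The argument is essentially mechanical once the spherical-function estimates are available, so I do not expect a deep obstacle. The only point requiring care is the replacement of the classical Hölder inequality by its Lorentz-space form in the weak endpoint cases: one must check that the exponents meet the index conditions $\frac1p+\frac1{p'}=1$ and $1=1+\frac1\iy$ for which the pairing $L^{p,1}\times L^{p',\iy}\to L^1$ is bounded on the discrete space $\x$, and to record the constant $C$. I would either cite the standard Hölder inequality for Lorentz spaces or give a short justification via the Hardy--Littlewood bound $\sum_{x\in\x}|f(x)g(x)|\le\int_0^\iy f^*(t)g^*(t)\,dt$ combined with the definitions of the $L^{p,1}$ and $L^{p',\iy}$ norms.
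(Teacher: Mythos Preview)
Your proposal is correct and matches the paper's own approach: the paper does not give a detailed proof but simply states that the theorem follows ``from equation (\ref{eq2}) and the estimates of $\varphi_z$,'' which is precisely the H\"older/Lorentz--H\"older pairing against Theorem~\ref{sfestimate} that you carry out. Your separate treatment of the endpoint $p=1$ via the boundedness of $\varphi_z$ on $S_1$ is also the natural completion of the argument.
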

The above estimates of spherical function are well known and can be found in the literature (see e.g.\cite{CMS1,CS} and the reference given there in).
Our goal is to extend Theorem \ref{radial restriction} to the general case. Now we state the restriction theorems for Helgason-Fourier transform on homogeneous tree.
These theorems  can also be considered as an analogue of restriction theorems proved in \cite{K} and \cite{KRS}.
\begin{theoremA*}Let $1\leq p<2.$
\begin{enumerate}
\item Suppose $f\in L^{p}(\mathfrak{X})$. For $p<r<p'$ and $z\in\mathbb{C}$ with $\Im z=\delta_{r^{\prime}},$ there exists a constant $C_{p,r}>0$ such that
$$\left(\int\limits_{\Omega}|\widetilde{f}(z,\omega)|^r d\nu(\omega)\right)^{1/r}\leq C_{p,r}\|f\|_{L^{p}(\mathfrak{X})}.$$
If $p=1$ then $r\in [1,\infty ]$ and $C_{p,r}=1$.
\item Suppose $f\in L^{p,1}(\mathfrak{X})$. For $r=p~\text{or}~p'$ and $z\in\mathbb{C}$ with $\Im z=\delta_{r^{\prime}},$ there exists a constant $C_{p}>0$ such that
$$\left(\int\limits_{\Omega}|\widetilde{f}(z,\omega)|^r d\nu(\omega)\right)^{1/r}\leq C_{p}\|f\|_{L^{p,1}(\mathfrak{X})}.$$
\end{enumerate}
\end{theoremA*}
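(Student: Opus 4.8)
The plan is to deduce Theorem A from norm estimates for the Poisson transform
\[\mathcal{P}_z g(x)=\int_\Omega p^{1/2+iz}(x,\omega)\,g(\omega)\,d\nu(\omega),\qquad x\in\x,\]
by a duality argument, exactly as suggested by the abstract. The first step is the adjoint relation: for finitely supported $f$ and $g\in L^{r'}(\Omega)$, Fubini's theorem gives
\[\int_\Omega \wtilde{f}(z,\omega)\,g(\omega)\,d\nu(\omega)=\sum_{x\in\x}f(x)\,\mathcal{P}_z g(x),\]
so that $f\mapsto\wtilde f(z,\cdot)$ and $\mathcal{P}_z$ are formal adjoints for the bilinear pairings on $\x$ and $\Omega$. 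Note that the hypothesis $\Im z=\delta_{r'}$ makes the exponent $1/2+iz$ have real part $1/2-\Im z=1/r$, so that $|p^{1/2+iz}(x,\omega)|=p(x,\omega)^{1/r}$; this is precisely the size that will control the Poisson transform on $L^{r'}(\Omega)$.

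The second step is the duality itself. By the $L^r$--$L^{r'}$ duality on $\Omega$, for finitely supported $f$ (which are dense in the relevant spaces),
\[\Big(\int_\Omega|\wtilde{f}(z,\omega)|^r\,d\nu(\omega)\Big)^{1/r}=\sup_{\|g\|_{L^{r'}(\Omega)}\le 1}\Big|\sum_{x\in\x}f(x)\,\mathcal{P}_z g(x)\Big|.\]
Applying H\"older's inequality on $\x$, part (1) follows at once from the strong-type bound $\|\mathcal{P}_z g\|_{L^{p'}(\x)}\le C\,\|g\|_{L^{r'}(\Omega)}$, valid for $\Im z=\delta_{r'}$ and $p<r<p'$. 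For part (2) one pairs instead via the Lorentz--H\"older inequality, which puts $L^{p,1}(\x)$ in duality with $L^{p',\infty}(\x)$; thus part (2) follows from the weak-type bound $\|\mathcal{P}_z g\|_{L^{p',\infty}(\x)}\le C\,\|g\|_{L^{r'}(\Omega)}$ at the endpoint parameters $r=p$ and $r=p'$, i.e. for $z\in\partial S_p$.

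The case $p=1$ can be done by hand and explains the sharp constant $C=1$. Using $|p^{1/2+iz}(x,\omega)|=p(x,\omega)^{1/r}$ together with Minkowski's integral inequality,
\[\Big(\int_\Omega|\wtilde{f}(z,\omega)|^r\,d\nu(\omega)\Big)^{1/r}\le\sum_{x\in\x}|f(x)|\Big(\int_\Omega p(x,\omega)\,d\nu(\omega)\Big)^{1/r}=\|f\|_{L^1(\x)},\]
where the last equality uses the normalization $\int_\Omega p(x,\omega)\,d\nu(\omega)=1$ of the Poisson kernel (harmonic measures are probability measures); the argument is uniform in $r\in[1,\infty]$.

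It remains to establish the Poisson transform estimates, which is where the real work lies and which I expect to be the main obstacle. The critical case is the endpoint weak-type bound $\mathcal{P}_z\colon L^{r'}(\Omega)\to L^{p',\infty}(\x)$ for $z\in\partial S_p$; it is borderline, in that it mirrors the statement of the spherical function theorem that $\varphi_z=\mathcal{P}_z(\mathbf 1)$ lies only in $L^{p',\infty}(\x)$, and not in $L^{p'}(\x)$, when $z\in\partial S_p$. Crucially, the crude modulus estimate $|\mathcal{P}_z g(x)|\le\|g\|_{L^{r'}(\Omega)}$ obtained from H\"older only yields the lossy $L^\infty(\x)$ bound and discards all decay in $x$; to capture membership in $L^{p',\infty}(\x)$ one must instead use the pointwise size estimates of the Poisson kernel on $\x$ together with a distribution-function (Schur-type) argument. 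Once the two endpoint weak-type bounds on $\partial S_p$ are in hand, the strong-type bounds into $L^{p'}(\x)$ for interior $r$ follow by interpolating the analytic family $\{\mathcal{P}_z\}_{z\in S_p}$ across the strip (Stein's interpolation with varying source exponents), and this interpolation is exactly what accounts for the appearance of the Lorentz space $L^{p,1}(\x)$ at the endpoints in part (2).
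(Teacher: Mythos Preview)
Your duality framework is exactly the one the paper uses: the adjoint identity, the $p=1$ case via Minkowski and $\int_\Omega p(x,\omega)\,d\nu(\omega)=1$, and the reduction of parts (1) and (2) to, respectively, strong-type and weak-type Poisson estimates all match the paper's proof. Where you diverge is in the two places you yourself flag as incomplete.

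For the endpoint weak-type bounds on $\partial S_p$, the paper does not use a Schur-type argument. For $r=p$ (i.e.\ $\Im z=\delta_{p'}$), it proves the pointwise domination
\[
|\mathcal{P}_z F(x)|\le C_p\,q^{-|x|/p'}\,\mathcal{E}(|F|)(\omega)\qquad\text{for all }\omega\in E(x),
\]
where $\mathcal{E}$ is the martingale maximal operator on $\Omega$; the weak-type $L^{p'}(\Omega)\to L^{p',\infty}(\x)$ bound then follows from the strong $(p',p')$ boundedness of $\mathcal{E}$ by a level-set count. For $r=p'$ (i.e.\ $\Im z=\delta_p$), the paper does not argue directly but factors $\mathcal{P}_{i\delta_p}=\mathcal{P}_{i\delta_{p'}}\circ\mathcal{I}_{i\delta_{p'}}$ through the intertwining operator, and invokes the known bound $\mathcal{I}_{i\delta_{p'}}:L^p(\Omega)\to L^{p'}(\Omega)$ (Mantero--Zappa). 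Your ``distribution-function (Schur-type)'' suggestion is too vague to assess; the maximal-function route is the substantive idea you are missing.

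For the intermediate strong-type bound, the paper's interpolation is not Stein interpolation of the analytic family $\{\mathcal{P}_z\}$ across the strip. It fixes $z$ with $\Im z=\delta_{r'}$ and interpolates on the \emph{target} side between the weak-type bound $\mathcal{P}_z:L^{r'}(\Omega)\to L^{r',\infty}(\x)$ (resp.\ $L^{r,\infty}(\x)$ when $r>2$) and the elementary H\"older bound $\mathcal{P}_z:L^{r'}(\Omega)\to L^\infty(\x)$; real interpolation of Lorentz spaces then yields $L^{r'}(\Omega)\to L^{p'}(\x)$ since $p'$ lies strictly between $r'$ (or $r$) and $\infty$. Your Stein-interpolation idea would require moving the source exponent $r'$ with $z$, which is more delicate and unnecessary here.
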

Extension of the above result for the case $p=2$ is the following theorem.
\begin{theoremB*}
Let $f\in L^{2,1}(\mathfrak{X})$. If $z\in\mathbb{R}\setminus(\tau/2)\Z,$ then there exists a constant $C_{z}>0$ such that
$$\left(\int\limits_{\Omega}|\widetilde{f}(z,\omega)|^2d\nu(\omega)\right)^{1/2}\leq C_{z}\|f\|_{L^{2,1}(\mathfrak{X})}.$$
\end{theoremB*}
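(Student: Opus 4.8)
The plan is to prove the theorem by duality, reducing it to a single norm estimate for the Poisson transform. Fix $z\in\R\setminus(\tau/2)\Z$ and write $R_z f(\omega)=\wtilde f(z,\omega)$ for the restriction of the Helgason--Fourier transform at $z$; the asserted inequality says exactly that $R_z$ is bounded from $L^{2,1}(\x)$ into $L^2(\Omega)$. Since $L^2(\Omega)$ is self-dual and $(L^{2,1}(\x))^\ast=L^{2,\infty}(\x)$, this is equivalent to the boundedness of the adjoint $R_z^\ast\colon L^2(\Omega)\to L^{2,\infty}(\x)$. Pairing the two transforms identifies $R_z^\ast$ with the Poisson transform $\mathcal{P}_z g(x)=\int_\Omega p^{1/2-iz}(x,\omega)\,g(\omega)\,d\nu(\omega)$; the conjugate kernel $p^{1/2-iz}$ is harmless because $-z$ again lies in $\R\setminus(\tau/2)\Z$. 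Thus everything reduces to the estimate
\be \|\mathcal{P}_z g\|_{L^{2,\infty}(\x)}\le C_z\,\|g\|_{L^2(\Omega)},\qquad z\in\R\setminus(\tau/2)\Z. \ee

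First I would isolate the role of the hypothesis. Taking $g\equiv 1$ gives $\mathcal{P}_z\mathbf 1=\varphi_z$, so the displayed bound is the $g$-uniform strengthening of the membership $\varphi_z\in L^{2,\infty}(\x)$ furnished by Theorem \ref{sfestimate}(3), valid precisely for $z\in\R\setminus(\tau/2)\Z$. At the exceptional points the pointwise estimate recalled above degenerates to $|\varphi_z(x)|\asymp q^{-|x|/2}(1+|x|)$, and the extra factor $1+|x|$ already destroys weak-$L^2$ membership. This tells me where the argument must be delicate: the estimate is genuinely borderline (full $L^2(\Omega)$ into the weak space $L^{2,\infty}$), and the proof must quantify, uniformly in $g$, the same non-resonance that keeps $\varphi_z$ in $L^{2,\infty}$.

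To prove the displayed estimate I would use the boundary martingale structure attached to a point. For $|x|=n$ write $\langle x,\omega\rangle=2k(x,\omega)-n$, where $k(x,\omega)$ is the order of confluence of the ray $\omega$ with the segment $[o,x]$; then $\mathcal{P}_z g(x)=q^{-(1/2-iz)n}\sum_{j=0}^{n}q^{(1-2iz)j}\,\mu^x_j(g)$, where $\mu^x_j(g)$ is the $\nu$-integral of $g$ over the boundary points confluent with $x$ to order exactly $j$. Writing $G_j(x)=\int_{\mathrm{shadow}(x_j)} g\,d\nu$ for the conditional expectations of $g$ along $[o,x]$ and summing by parts expresses $\mathcal{P}_z g(x)$ through the $G_j$ weighted by $q^{(1-2iz)j}$. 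The factor $q^{-(1/2-iz)n}$ contributes the decay $q^{-n/2}$ that one expects from $\varphi_z$, and when one forms $\sum_{|x|=n}|\mathcal{P}_z g(x)|^2$ the cross terms produce the purely oscillatory geometric sums $\sum_\ell (q^{2iz})^\ell$. Their uniform boundedness is equivalent to $q^{2iz}\neq 1$, i.e.\ to $z\notin(\tau/2)\Z$; at the excluded points these sums collapse to their number of terms, reproducing exactly the offending linear factor $1+|x|$.

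The hard part will be the passage from these per-sphere computations to the weak-type conclusion. A uniform bound $\sum_{|x|=n}|\mathcal{P}_z g(x)|^2\le C_z\|g\|^2_{L^2(\Omega)}$ for every $n$ is \emph{not} by itself enough to place $\mathcal{P}_z g$ in $L^{2,\infty}(\x)$: a function that equals a single nonzero constant at one vertex of each sphere satisfies such a bound yet has infinite weak-$L^2$ quasinorm. One must therefore retain the cancellation coming from the oscillatory weights together with the equidistribution of $\mathcal{P}_z g$ over each sphere, rather than estimate the Poisson kernel in absolute value. Concretely, I would combine the summation-by-parts expression with the orthogonality of the martingale differences $G_j-G_{j+1}$ in $L^2(\nu)$ and the shadow-measure estimate $\nu(\mathrm{shadow}(x_j))\asymp q^{-j}$ to establish the sharp on-average decay mirroring the $c(z)q^{-n/2}$ asymptotics of $\varphi_z$, and then sum the resulting dyadic contributions across scales, the borderline nature being reflected in the appearance of the weak space $L^{2,\infty}$ rather than $L^2$. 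Controlling this cancellation uniformly in $g$ is the crux of the whole argument.
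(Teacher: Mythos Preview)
Your reduction by duality to the Poisson estimate
\[
\|\mathcal{P}_z g\|_{L^{2,\infty}(\x)}\le C_z\,\|g\|_{L^2(\Omega)},\qquad z\in\R\setminus(\tau/2)\Z,
\]
is exactly what the paper does, and your expansion of $\mathcal{P}_z g(x)$ through the confluence order followed by summation by parts matches the paper's computation. The gap is in what follows: you correctly note that a uniform bound on $\sum_{|x|=n}|\mathcal{P}_z g(x)|^2$ is \emph{not} enough for the weak-$L^2$ conclusion, but you then leave the ``crux of the whole argument'' as a program (``combine \ldots, sum the resulting dyadic contributions'') without actually carrying it out. As stated, the proposal does not contain a mechanism that converts the oscillation into a weak-type bound.

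The paper sidesteps the per-sphere route altogether by extracting a \emph{pointwise} estimate. After the summation by parts (which produces the bounded constants $1/(1-q^{is})$ and is the place where $z\notin(\tau/2)\Z$ enters), the partial sums $\sum_{m=1}^{|x|}q^{ism}\Delta_m g(\omega)$ are recognised as the conditional expectations of the auxiliary function
\[
G(\omega)=\sum_{m\ge 1}q^{ism}\,\Delta_m g(\omega),
\]
which lies in $L^2(\Omega)$ with $\|G\|_{L^2(\Omega)}\le \|g\|_{L^2(\Omega)}$ by orthogonality of the $\Delta_m$. One obtains, for every $x\in\x$ and every $\omega\in E(x)$,
\[
|\mathcal{P}_z g(x)|\le C_z\,q^{-|x|/2}\bigl(\mathcal{E}(|g|)(\omega)+\mathcal{E}(G)(\omega)\bigr),
\]
where $\mathcal{E}$ is the martingale maximal operator. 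The passage to $L^{2,\infty}(\x)$ is then the same level-set argument already used in the $1<p<2$ case: integrate the above over the shadow $E(x)$, count vertices in $\{|\mathcal{P}_z g|>\lambda\}$, and invoke the strong-type $(2,2)$ boundedness of $\mathcal{E}$. This pointwise domination by maximal functions of two $L^2(\Omega)$ inputs is the missing idea in your outline; once you have it, the worry about equidistribution over spheres disappears.
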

 We will prove the above theorem  using duality argument. This is based on the fact that the norm estimates of Helgason--Fourier
 transform on the boundary are equivalently related to the norm estimates of Poisson transform. It is also  worth mentioning that Poisson transform of $L^p(\Omega)$
 functions are eigenfunctions of the Laplacian. It was proved by Mantero and Zappa in
  \cite{MZ} that all eigenfunctions of Laplacian are given by  Poisson transform of martingale on the boundary.
  In this paper we characterize all such eigenfunctions of the Laplacian which are Poisson transform of functions on $L^p(\Omega)$.

Since Poisson transforms are certain matrix coefficients of representations on  $G$ (the group of isometries of $\x$)
therefore its estimates play very important role in harmonic
  analysis on homogeneous tree and its group of isometries.  In fact using the estimates (\ref{pcase}) and (\ref{p'case}) Cowling et. al. in \cite{CMS} proved
  the following version of generalized Kunze--Stein phenomenon
  \be L^{p,r}(G)\ast L^{p,s}(G)\subseteq L^{p,t}(G),\label{cowling}\ee
where $1<p<2$, $\frac1r+\frac1s\geq\frac1t$ and $r,s,t \in [1,\infty]$.
In \cite{V} Veca extended the above convolution relation and proved the end point version of (\ref{cowling})
\be L^{2,1}(G)\ast L^{2,1}(G)\subseteq L^{2,\infty}(G).\label{endpoint}\ee

 Regarding the estimates of the Poisson transform, the case $p=2$ is slightly different.
   This can be seen in the behavior of the elementary spherical
 function $\varphi_z$ (which are Poisson transform of constant function $1$). In fact it is mentioned in Theorem \ref{sfestimate}
 that $\varphi_{\alpha\pm i\delta{p}}\in L^{p^{\prime},\infty}(\x)$ for all $\alpha\in\R.$ However
 $\varphi_z\notin L^{2,\infty}(\x)$ if $z\in(\tau/2)\Z$ and $\varphi_z\in L^{2,\infty}(\x)$ if $z\in\R\setminus(\tau/2)\Z.$ In this article
we also characterize all such eigenfunctions of the Laplacian which are Poisson transform of functions on $L^2(\Omega)$ corresponding to the parameters $z\in\R\setminus(\tau/2)\Z.$


 The paper is structured in three small sections apart from the introduction. Section 2 provides the notation, definition and basic
  known results that we will use in this paper. In Section 3 we discuss the characterization of $L^p$-type eigenfunction of Laplacian. The restriction theorems are proved in Section 4.

\section{Notation and Preliminary Results}
Most of our notation and results are standard. We will mainly follow the notation of \cite{CS,CMS,FTC,FTP1,ST}. The letters $\C$ and
 $\R$ will denote the set of complex and real numbers respectively. For $z\in \C$ we use the notation $\Re z$ and $\Im z$ for real and imaginary part of
  $z$ respectively. Everywhere in this paper, any $p\in(1,\infty)$ is related to $p^\prime$ by the relation $\frac{1}{p}+\frac{1}{p^\prime}=1$.
  We will use the standard practice of using the letter $C$ for constant, whose  value may change from one line to another line. Occasionally
  the constant $C$ will be suffixed to show its dependency on related parameters. Given positive functions $A$ and $B$ defined on a set $X$, we say that $A\asymp B$
in $X$ if there exists positive constants $C_1$ and $C_2$ such that $C_1A(t)\leq B(t) \leq C_2A(t)\;\; \forall t\; \in X$.

\subsection{Homogeneous tree }Let $\x$ be a homogeneous tree of degree $q+1\;(q>1)$ that is, a connected graph with no loops, in which every vertex is adjacent to $q+1$ other vertices.
Naturally, the measure on $\x$ is the counting measure. For any finite subset $E$ of $\x,$ let $\#E$ denotes the number of vertices in $E$.
The distance $d(x,y)$ between two vertices $x$ and $y$ is defined as the number of edges joining $x$ and $y$. We write $S(x,n)=\{y\in \x: d(x,y)=n\}$ and
$B(x,n)=\{y\in \x: d(x,y)\leq n\}$. Clearly $$\#S(x,n)=(q+1)q^{n-1}\asymp q^n\:\; \text{and}\;\;  \#S(x,n)\asymp\#B(x,n).$$
 Let $o$ be a fixed reference point in $\x$ and denote by $|x|$ the distance of $x$ from $o$. Let $G$ be the group of isometries of the metric space $(\x,d)$ and
 let $K$ be the stabilizer of $o$ in $G$. The map $g\rightarrow g\cdot o$ identifies $\x$ with the coset space $G/K$, so that functions on $\x$ corresponds to $K$-right
  invariant functions on $G$.  Further radial functions on $\x$, that is, functions which only depend on $|x|$, corresponds to $K$-bi-invariant functions on $G$. If $E(\x)$ is a
   function space on $\x$  we will denote by $E(\x)^\#$  the radial functions in $E(\x)$.

\subsection{Boundary of $\x$} An infinite geodesic ray $\omega$ in $\x$ is an one-sided sequence $\{\omega_{n}: n=0,1,2\ldots \}$ where $\omega_n$'s are
in $\x$. These infinite geodesic rays are identified if there exists non-negative integers $i$ and $j$ such that $\omega_{n}=\omega^{\prime}_{n+i}$ for all $n\geq j$.
This identification is an equivalence relation. Let [$\omega$] denote the equivalence class of $\omega$. In every equivalence class [$\omega$],
there is a unique geodesic ray, denoted by $\omega$, starting at $o$. The boundary of $\x$ is the set of all infinite geodesic rays starting at $o$,
and will be denoted by $\Omega$. Notice that the map $k\rightarrow k\cdot\omega$ represents a transitive action of $K$ on $\Omega$.

If $x$ is in $\x$ and $\omega$ is in $\Omega$, we define $c(x,\omega)=x_l$ where $x_l$ is the last point lying on $\omega$ in the geodesic path $\{o,x_1,\ldots,x\}$ joining $o$ to $x$.
  For $x\in \x$ we define $E_j(x)=\{\omega\in \Omega : |c(x,\omega)|\geq j\}$ for all $j\geq0$.  Note that $E_0(x)=\Omega$ and $E_j(x)=\emptyset$ for $j>|x|$ and
  denote $E_{|x|}(x)$ by $E(x)$. The sets $E_j(x)$ are open subsets of $\Omega$ and indeed forms a basis. There exists a unique $K$-invariant probability measure
   $\nu$ such that $$\nu(E_j(x))=\dfrac{q}{(q+1)q^j}.$$
  Thus $(\Omega,\mathcal{M},\nu)$ is a measure space where $\mathcal M$ is a $\sigma$ algebra generated by the sets $\{E(x):\; x\in \x\}$. Let $\mathcal M_{n}$ be
   the $\sigma$ sub-algebra of $\mathcal M$ generated by $E(x),~|x|\leq n.$ The conditional expectation of a locally integrable function $F$ on $(\Omega, \mathcal{M}, \nu)$
   relative to the $\sigma$ sub-algebra $\mathcal{M}_{n},~n\geq0$ is given by
\be \mathcal{E}_{n}(F)(\omega)=\frac{1}{\nu(E_{n}(\omega))}\int\limits_{E_{n}(\omega)}F(\omega')d\nu(\omega')\label{maximal}\ee
and the $n$th difference operator is given as $\Delta_{n}(F)=\mathcal{E}_{n}(F)-\mathcal{E}_{n-1}(F)$ where $n\geq 0$ and $\mathcal{E}_{-1}=0$.
 Notice that $\mathcal{E}_{n}(F)=\sum\limits_{j=0}^{n}\Delta_{j}(F)$ whenever $n\geq 1$ and $\mathcal{E}_{0}(F)=\Delta_{0}(F)$.
  The conditional expectation and the difference operator satisfies $\mathcal{E}_{m}(\mathcal{E}_{n}(F))=\mathcal{E}_{k}(F)$ where $k=\min\{m,n\}$ and
\be\langle \Delta_{m}(F),\Delta_{n}(F)\rangle_{L^{2}(\Omega)}=
 \begin{cases}
0 &\text{when }m\neq n\\
\|\Delta_{n}(F)\|^{2}_{L^{2}(\Omega)}&\text{when }m=n\\
\end{cases}
\label{ortho} \ee
respectively. Further the maximal operator defined by the formula $\mathcal{E}(F)(\omega)=\sup\limits_{n\geq 0}|\mathcal{E}_{n}(F)(\omega)|$, is weak type $(1,1)$ and strong type $(p,p)$ whenever $p>1$. We refer \cite[Ch.IV]{ST} for more details.

\subsection{Poisson transformation and Eigenfunction of $\mathcal L$} On the boundary $\Omega,$ $\nu$ is the $G$-quasi-invariant probability measure and the Poisson kernel $p(g\cdot o,\omega)$ is defined to be the Radon-Nikodym derivative $d\nu(g^{-1}\omega)/d\nu(\omega)$.
The height $h_{\omega}(x)$ of $x$ in $\x$ with respect to $\omega$ is defined by the formula \[h_{\omega}(x)=2|c(x,\omega)|-|x|.\]
 Then the Poisson kernel is given by
$$ p(x,\omega)=q^{h_{\omega}(x)} \;\; \forall x\in \x \;\; \forall \omega\in\Omega.$$
So the Poisson kernel is a function on $\x\times \Omega$ and can also be written as
 \be p(x,\omega)=\sum\limits_{j=0}^{|x|}q^{2j-|x|}\mathcal{X}_{E_j(x)\setminus E_{j+1}(x)}(\omega)\;\; \forall x\in \x \;\; \forall \omega\in\Omega.\label{poissonfull}\ee

For $z\in\C$, we define the representations $\pi_{z}$ of $G$ on $C(\Omega)$ by the formula
$$\pi_{z}(g)\eta(\omega)=p^{1/2+iz}(g\cdot o,\omega)\eta(g^{-1}\omega)\quad\forall g\in G~~ \forall\omega\in\Omega.$$
It is obvious that $\pi_{z}=\pi_{z+\tau},$ where $\tau=2\pi/\log q$. We denote the torus $\R/\tau \Z$ by $\mathbb{T}$, which can be identified with the interval $[-\tau/2,\tau/2)$.
The Poisson transformation $\mathcal{P}_{z}:C(\Omega)\rightarrow C(\x)$ is given by the formula
 $$\mathcal{P}_{z}\eta(x)=\left\langle \pi_{z}(x)1,\eta\right\rangle=\int\limits_{\Omega}p^{1/2+iz}(x,\omega)\eta(\omega)d\nu(\omega).$$

  The Laplace operator (or Laplacian) $\mathcal L$ on $\x$ is defined by $$\mathcal Lf(x)=\frac{1}{q+1}\sum\limits_{y:d(x,y)=1}f(y).$$
  Now we give a brief summary of eigenfunctions of the
  Laplacian. We refer \cite[Ch.II]{FTC} for details.  For fixed $\omega$, the function $x\to p^{1/2+iz}(x,\omega)$ is an
  eigenfunction of the Laplace operator with eigenvalue $\gamma(z)$ and therefore
  $$\mathcal L \mathcal{P}_{z}\eta(x) =\gamma(z) \mathcal{P}_{z}\eta(x)\;\;  \text{where} \;\; \gamma(z)=\frac{q^{1/2+iz}+q^{1/2-iz}}{q+1}.$$

Let $\mathcal{K}_{n}(\Omega)$ be a linear space of functions on $\Omega$ which are linear combinations of characteristic functions of the sets
 $E(x),~|x|\leq n$ and define $\mathcal{K}(\Omega)=\bigcup\limits_{n\geq 0}\mathcal{K}_{n}(\Omega)$. A martingale $\mathbf F=(F_{n})_{n\geq 0}$ is
 such that each $F_{n}\in \mathcal{K}_{n}(\Omega)$ and $\mathcal{E}_{m}(F_{n})=F_{k}$ where $k=\min\{m,n\}$. It is easy to see that if $\mathbf F=(F_{n})$ is a
  martingale then for each $n\geq 0$, $F_{n}=\sum\limits_{j=0}^{n}\Delta_{j}(F_{m})$ whenever $m\geq n$. It is also interesting to observe that every $F$
  in $L^{p}(\Omega)$, where $1\leq p<\infty$ can be identified to the martingale $\mathbf{F}=(\mathcal{E}_{n}(F))$ via the
  conditional expectation, with $\mathcal{E}_{n}(F)$ tending to $F$ in $L^{p}$ norm as $n\rightarrow\infty$. However, not every martingale corresponds to a $L^{p}$ function.
  But if a martingale $\mathbf{F}=(F_{n})$ satisfies $\sup\limits_{n\geq 0}\|F_{n}\|_{L^{p}(\Omega)}<\infty$ for some $p>1$, then there exists an $F$ in $L^{p}(\Omega)$
  such that $\mathcal{E}_{n}(F)=F_{n}$ for all $n\geq 0$.

The dual space $\mathcal{K}'(\Omega)$ of $\mathcal{K}(\Omega)$ identifies to the space of all martingales which means that every linear functional $F$ defined on $\mathcal{K}(\Omega)$ corresponds to a unique martingale $\mathbf F=(F_{n})$ and is given by
$$F(\eta)=\lim\limits_{n\rightarrow\infty}\int\limits_{\Omega}F_{n}(\omega)\eta(\omega)d\nu(\omega)\quad \forall\eta\in\mathcal{K}(\Omega).$$
Using the duality above, we can extend the definition of the Poisson transformation to a martingale $\mathbf F=(F_{n})$ as
$$\mathcal{P}_{z}{\mathbf F}(x)=\lim\limits_{n\rightarrow\infty}\int\limits_{\Omega}p^{1/2+iz}(x,\omega)F_{n}(\omega)d\nu(\omega)
.$$
It follows from the
definitions that the  Poisson transform of a martingale is an eigenfunction of the Laplacian that is if  $\mathbf F$ is a martingale then $\mathcal{P}_{z}{\mathbf F}\in \mathbb{E}_{z}(\x),$
where  $$\mathbb{E}_{z}(\x)=\{u\in C(\x): \mathcal{L}u(x)=\gamma(z)u(x) \;\;\forall x\in\x\}$$
is the eigen-space of the Laplace operator with eigenvalue $\gamma(z)$.
The following theorem proved in \cite[Ch.II, Theorem 1.2]{FTC}  gives the complete characterization of eigenfunctions of $\mathcal L$ in terms of Poisson transform of martingale (see also \cite[Corollary 3.5]{MZ}).
 \begin{Theorem} Let $z\in\C$. If $z\neq (k\tau+i)/2$ where $k\in\Z$ then the map $\mathcal P_z: \mathcal{K}'(\Omega)\to\mathbb{E}_z(\x)$ is a bijection.\label{bijection}
 \end{Theorem}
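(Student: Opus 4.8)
The plan is to reduce everything to a concrete statement about the family of boundary functions $\{p^{1/2+iz}(x,\cdot):x\in\x\}$ inside $\mathcal{K}(\Omega)$. First I would observe, using the expansion (\ref{poissonfull}), that for fixed $x$ the function $\omega\mapsto p^{1/2+iz}(x,\omega)$ lies in $\mathcal{K}_{|x|}(\Omega)$, so that for a martingale $\mathbf F=(F_n)$ the definition of $\mathcal{P}_z$ collapses to the finite integral $\mathcal{P}_z\mathbf F(x)=\int_\Omega F_{|x|}(\omega)\,p^{1/2+iz}(x,\omega)\,d\nu(\omega)$ (the limit is eventually constant, since $\mathcal E_{|x|}F_m=F_{|x|}$ for $m\ge|x|$). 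The one elementary computation driving the whole argument is the following: if $|x|=n$ and $x^-$ is the neighbour of $x$ nearer to $o$, then on the cell $E(x^-)$ the function $p^{1/2+iz}(x,\cdot)$ equals $\alpha:=q^{(1/2+iz)n}$ on the subcell $E(x)$ and $\beta:=q^{(1/2+iz)(n-2)}$ on $E(x^-)\setminus E(x)$, and $\alpha-\beta=q^{(1/2+iz)(n-2)}\bigl(q^{1+2iz}-1\bigr)$ vanishes precisely when $q^{1+2iz}=1$, that is, when $z=(k\tau+i)/2$.

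For injectivity I would prove by induction on $n$ that the functions $p^{1/2+iz}(x,\cdot)$ with $|x|\le n$ span $\mathcal{K}_n(\Omega)$ whenever $z\neq(k\tau+i)/2$. Writing $\mathcal{K}_n=\mathcal{K}_{n-1}\oplus W_n$, where $W_n$ consists of the functions in $\mathcal{K}_n$ having mean zero on every cell of level $n-1$, the inductive step amounts to showing that the level-$n$ functions attached to the children of a fixed $x^-$ project onto the mean-zero functions supported in $E(x^-)$; by the computation above this projection is surjective exactly when $\alpha\neq\beta$. Granting the spanning statement, injectivity is immediate: if $\mathcal{P}_z\mathbf F=0$ then the functional $F$ annihilates every $p^{1/2+iz}(x,\cdot)$, hence all of $\mathcal{K}(\Omega)$, so $\mathbf F=0$.

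For surjectivity, given $u\in\mathbb{E}_z(\x)$ I would construct the martingale inductively. Set $F_0=u(o)\cdot\mathbf 1$, and at stage $n$ look for $F_n=F_{n-1}+g_n$ with $g_n\in W_n$, so that $\mathcal E_{n-1}F_n=F_{n-1}$ holds automatically; the only requirement is $\int_\Omega F_n\,p^{1/2+iz}(x,\cdot)\,d\nu=u(x)$ for $|x|=n$ (the lower-level identities persist because $g_n$ is orthogonal to every $\mathcal{M}_{n-1}$-measurable function). Solving for $g_n$ is possible, by the spanning/projection result, provided the data satisfy one linear constraint per cell $E(x^-)$: the sum over the children $x$ of $x^-$ of $r(x):=u(x)-\int F_{n-1}p^{1/2+iz}(x,\cdot)\,d\nu$ must vanish. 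I would verify this constraint by applying the eigenfunction equation at $x^-$ to both $u$ and $x\mapsto p^{1/2+iz}(x,\omega)$ (each an eigenfunction with eigenvalue $\gamma(z)$): the two resulting expressions $(q+1)\gamma(z)u(x^-)-u(x^{--})$ cancel, using the inductive hypothesis at levels $n-1$ and $n-2$. The resulting $\mathbf F=(F_n)$ is then a martingale with $\mathcal{P}_z\mathbf F=u$.

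The main obstacle --- and the sole place the hypothesis $z\neq(k\tau+i)/2$ is used --- is the nondegeneracy $\alpha\neq\beta$, i.e.\ $q^{1+2iz}\neq1$. At an exceptional point $z=(k\tau+i)/2$ one has $q^{1/2+iz}=\pm1$, whence $p^{1/2+iz}(x,\omega)=(-1)^{k|x|}$ is independent of $\omega$, so $\mathcal{P}_z$ degenerates to the rank-one map $\eta\mapsto(-1)^{k|x|}\int_\Omega\eta\,d\nu$ and the bijection fails; this is exactly why the exceptional set must be removed. Beyond this, the remaining work is careful bookkeeping: treating the base cases $n=0,1$ (where $o$ has $q+1$ rather than $q$ children), tracking the dimension count $\dim W_n=(q+1)q^{n-2}(q-1)$, and checking that the over-determined system defining $F_n$ is consistent at every level.
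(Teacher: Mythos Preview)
Your argument is correct. Note, however, that the paper does not supply its own proof of this statement: it is quoted from \cite[Ch.~II, Theorem~1.2]{FTC} (see also \cite[Corollary~3.5]{MZ}), so there is no ``paper's proof'' to compare against directly. That said, your approach is precisely the one taken in \cite{FTC}: the key lemma there is exactly your spanning statement, that $\{p^{1/2+iz}(x,\cdot):|x|\le n\}$ spans $\mathcal{K}_n(\Omega)$ if and only if $q^{1+2iz}\neq1$, and the inductive construction of the martingale from an eigenfunction $u$ proceeds in the same cell-by-cell fashion, with the compatibility constraint $\sum_{x\sim x^-,\,|x|=n}r(x)=0$ verified via the eigenfunction equation at $x^-$.

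One small point worth making explicit in a write-up: in the surjectivity step you solve, on each cell $E(x^-)$, a system of $q$ equations in the $q-1$ unknowns determining $g_n|_{E(x^-)}$. Your computation $\alpha\neq\beta$ shows the system has full rank $q-1$, and you correctly identify the single compatibility relation $\sum_x r(x)=0$; it is helpful to note (as you implicitly use) that the projections of $p^{1/2+iz}(x,\cdot)$ onto $W_n$ for the $q$ children of $x^-$ sum to zero, which is why this is the \emph{only} relation. Your treatment of the base case $n=1$ (where $o$ has $q+1$ children and there is no grandparent term) and your identification of the degenerate rank-one behaviour at $z=(k\tau+i)/2$ are both accurate.
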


The elementary spherical function $\varphi_z$ is the radial eigenfunction of the Laplacian normalized by $\varphi_z(0)=1$. It can also be represented by
$$\varphi_z(x)=\mathcal{P}_z 1=\int\limits_{\Omega}p^{1/2+iz}(x,\omega)d\nu(\omega).$$

It is known that
\begin{equation}\label{eqsf}
\varphi_z(x)= \begin{cases}
\vspace*{.2cm} \left(\frac{q-1}{q+1}|x|+1\right)q^{-|x|/2}&\forall z\in\ \tau\Z\\
\vspace*{.2cm}\left(\frac{q-1}{q+1}|x|+1\right)q^{-|x|/2}(-1)^{|x|}&\forall z\in {\tau/2}+\tau\Z\\
 \mathbf{c}(z)q^{{(iz-1/2)}|x|}+\mathbf{c}(-z)q^{{(-iz-1/2)}|x|}&\forall z\in\C\setminus(\tau/2)\Z,
\end{cases}
\end{equation}
where $\mathbf{c}$ is a meromorphic function given by \[\mathbf{c}(z)=\frac{q^{1/2}}{q+1}\frac{q^{1/2+iz}-q^{-{1/2}-iz}}{q^{iz}-q^{-iz}}\quad \forall z\in\C\setminus(\tau/2)\Z.\]
It is easy to see that $|\varphi_{z}(x)|\leq 1$ for all $x\in\x$ whenever $z\in S_1$. Note that if $1\leq r\leq \infty$ and $z=\alpha+i\delta_{r^{\prime}},~\alpha\in\R$
then by H\"{o}lder's inequality we have
\be\|\mathcal{P}_{z}\eta\|_{L^\infty(\x)}=\|\left\langle \pi_{z}(\cdot)1,\eta\right\rangle\|_{L^\infty(\x)}\leq \|\eta\|_{L^{r^\prime}(\Omega)}\label{bounded poisson}\ee
for all $\eta \in C(\Omega)$. In particular if $\eta \in C(\Omega)$ then $\mathcal{P}_{z}\eta\in L^\infty(\x)$.
\subsection{The Helgason--Fourier Transform on $\x$}
Let $\mathcal{D}(\x)$ be the space of all finitely supported functions on $\x$. The Helgason-Fourier transform $\wtilde{f}$ of a function $f$ in $\mathcal{D}(\x)$ is a function on $\mathbb \C\times\Omega$ defined by the formula
\[\wtilde{f}(z,\omega)=\sum\limits_{x\in\x}f(x)p^{1/2+iz}(x,\omega).\]

The spherical Fourier transform $\hat{f}$ of a function  $f\in D(\x)^\#$  is defined by
$$\hat{f}(z)=\sum\limits_{x\in\x}f(x)\varphi_z(x)$$
for all $z\in\C$. It can be easily shown and a well known fact that the spherical Fourier transform of a radial function coincides with the full Helgason-Fourier transform. That is if $f \in D(\x)^\#$ then
$$\widetilde{f}(z,\omega)=\hat{f}(z)=\sum\limits_{x\in\mathfrak{X}}f(x)\varphi_{z}(x).$$
The inversion formula  for Helgason-Fourier transform of $f\in D(\x)$ (see \cite[Theorem 2.6]{ CMS}) is given by
$$f(x)=\int_{\mathbb T}\int_{\Omega}\widetilde{f}(s,\omega)p^{1/2-is}(x,\omega)d\nu(\omega) d\mu(s).$$
In particular if $f\in D(\x)^\#$, then
$$f(x)=\int_{\mathbb T}\hat{f}(s)\varphi_{-s}(x) d\mu(s).$$
If $f_1$ and $f_2$ are in $D(\x),$ then
$$\sum\limits_{x\in\x}f_1(x)\overline{f_2}(x)=\int_{\mathbb T}\int_{\Omega}\widetilde{f_1}(s,\omega)\overline{\widetilde{f_2}(s,\omega)}d\mu(s)d\nu(\omega).$$

The Helgason--Fourier transformation extends to an isometric map from $L^2(\x)$ into $L^2(\mathbb T\times\Omega, d\mu(s)d\nu(\omega))$. In fact its range is the subspace of
$L^2(\mathbb T\times\Omega, d\mu(s)d\nu(\omega))$ of the functions $g$ which satisfy the following symmetry condition
$$\int_{\Omega}p^{1/2-is}(x,\omega) g(s,\omega)d\nu(\omega)=\int_{\Omega}p^{1/2+is}(x,\omega) g(-s,\omega)d\nu(\omega).$$
for every $x\in\x$ and and almost every $s$ in $\mathbb T$. Here $\mu$ denotes the Plancherel measure whose density with respect Lebesgue maesure is given by $C_G|\mathbf c(s)|^{-2}$.
\subsection{Basic properties of Lorentz spaces}
We also need some basic facts about the Lorentz spaces ( for details see \cite{LG1}). Let $(M,m)$ be a $\sigma$-finite measure space, $f:M\rightarrow\C$ be a measurable function. The distribution function $d_{f}:(0,\infty)\rightarrow (0,\infty]$ and the nonincreasing rearrangement $f^{*}:(0,\infty)\rightarrow (0,\infty]$ are defined by the formulae \[d_{f}(s)=m\big(\{x\in M: |f(x)|>s\}\big)\quad\text{and}\quad f^{*}(t)=\inf\{s:d_{f}(s)\leq t\}\]
For $p\in[1,\infty)$ and $q\in[0,\infty]$, we define
\[\|f\|_{p,q}=\begin{cases}
\Big(\frac{q}{p}\int\limits_{0}^{\infty}[f^{*}(t)t^{1/p}]^q \frac{dt}{t}\Big)^{1/q}&\text{when}~q<\infty,\\
\sup\limits_{t>0}t^{1/p}f^{*}(t)=\sup\limits_{t>0}td_{f}(t)^{1/p}&\text{when }q=\infty.
\end{cases}\]
For $p\in[1,\infty)$ and $q\in[0,\infty)$, we define the {\it{Lorentz~space}} $L^{p,q}(M)$ as follows:
\[L^{p,q}(M)=\{f:M\rightarrow\C:f~\text{measurable and}~\|f\|_{p,q}<\infty\}.\]
By $L^{\infty,\infty}(M)$ and $\|\cdot\|_{\infty,\infty}$ we mean the space $L^{\infty}(M)$ and the norm $\|\cdot\|_{\infty}$ respectively and for the other values of $p$ we have $L^{p,p}(M)=L^{p}(M)$. For $1<p<\infty$ the space $L^{p,\infty}(M)$ is known as the weak $L^p$ space and also $L^{p,q}\subset L^{p,s}$ for all $1\leq q\leq s\leq\infty$.

\section{characterization of eigenfunctions}
In this section, we shall characterize the eigenfunctions of the Laplacian $\mathcal L$ which are Poisson transform of $L^p$ functions defined on the boundary $\Omega$. We begin with the following lemma:
\begin{Lemma}\label{lemmaweak}
If $1<p\leq 2,$ then for every $u\in L^{p^{\prime},\infty}(\x)$ there exists a positive constant $C$  (independent of $u$) such that
\begin{equation}
\frac{1}{N}\sum\limits_{x\in B(0,N)}|u(x)|^{p^{\prime}}\leq C\|u\|^{p^{\prime}}_{L^{p^{\prime},\infty}} \;\;\; \forall N\in\mathbb{N}.
\label{eq7}\end{equation}
\end{Lemma}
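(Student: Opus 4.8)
The plan is to pass from the discrete sum to an integral of the nonincreasing rearrangement $u^*$, and then to exploit a feature peculiar to counting measure: on $\x$ the space $L^{p',\infty}$ already embeds into $L^\infty$. Write $A=\|u\|_{L^{p',\infty}}$. By the definition of the weak norm recalled in the preliminaries, $u^*(t)\le A\,t^{-1/p'}$ for every $t>0$. Since the counting measure has atoms of mass one, the rearrangement $u^*$ is constant and equal to $\|u\|_{L^\infty(\x)}$ on $[0,1)$; letting $t\to1^-$ in the weak bound gives $\|u\|_{L^\infty(\x)}\le A$. Hence one has the combined pointwise estimate $u^*(t)\le A\min\{1,t^{-1/p'}\}$ for all $t>0$, which stays finite near the origin precisely because of this $L^\infty$ control.

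First I would invoke Hardy--Littlewood majorization for the restriction of $u$ to the finite set $B(0,N)$. Since $|u|\,\chi_{B(0,N)}\le|u|$ pointwise, its rearrangement is dominated by $u^*$ and is supported in $[0,\#B(0,N)]$, so by equimeasurability
\[\sum_{x\in B(0,N)}|u(x)|^{p'}=\int_0^{\#B(0,N)}\big((|u|\chi_{B(0,N)})^*(t)\big)^{p'}\,dt\le\int_0^{\#B(0,N)}\big(u^*(t)\big)^{p'}\,dt.\]
Feeding in the bound for $u^*$ and splitting the integral at the crossover $t=1$ gives
\[\int_0^{\#B(0,N)}\big(u^*(t)\big)^{p'}\,dt\le\int_0^1 A^{p'}\,dt+\int_1^{\#B(0,N)}A^{p'}\,t^{-1}\,dt=A^{p'}\big(1+\log\#B(0,N)\big),\]
where I used $\big(t^{-1/p'}\big)^{p'}=t^{-1}$ on the tail.

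Finally I would bring in the volume growth of the tree: since $\#B(0,N)\asymp q^{N}$, we have $\log\#B(0,N)\le C\,N$ for all $N\ge1$, whence $1+\log\#B(0,N)\le C N$. Dividing the previous bound by $N$ yields $\frac1N\sum_{x\in B(0,N)}|u(x)|^{p'}\le C A^{p'}=C\|u\|^{p'}_{L^{p',\infty}}$, with $C$ depending only on $q$ and $p'$ and not on $u$, as required. The one delicate point, and the place where the hypotheses really bite, is the behaviour near $t=0$: the weak estimate $u^*(t)\le A\,t^{-1/p'}$ by itself makes $\int_0(u^*)^{p'}$ diverge logarithmically at the origin, and it is only the $L^\infty$ bound coming from the atomic structure of $\x$ that tames the contribution of the largest values of $u$. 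The logarithmic divergence at the upper limit is exactly $\log\#B(0,N)\asymp N$, which is what produces the normalization $1/N$ in the statement.
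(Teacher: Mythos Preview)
Your argument is correct and is exactly the kind of Lorentz-space computation the paper has in mind: the authors omit the proof entirely, remarking only that ``it is easy and follows from the properties of Lorentz spaces.'' Your use of the rearrangement bound $u^*(t)\le A\,t^{-1/p'}$, the $L^\infty$ cap coming from the atomicity of counting measure, and the exponential volume growth $\#B(0,N)\asymp q^N$ to turn the logarithm into the factor $N$ is precisely what is needed, so nothing further is required.
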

We leave the proof of the above lemma. It is easy and follows from the properties of Lorentz spaces.
\begin{Theorem} Let $1<p<2$. Suppose that  $u\in C(\x)$ and $z=\alpha+i\delta_{p^{\prime}},~\alpha\in\mathbb{R}~$. Then $u(x)=\mathcal{P}_{z}F(x)$ for
some $F\in L^{p^{\prime}}(\Omega)$ if and only if $u\in L^{p^{\prime},\infty}(\x)$ and  $\mathcal L u(x)=\gamma(z) u(x)$ .
 Moreover there exists positive constants $C_{1},~C_{2}$ such that for all $F\in L^{p^{\prime}}(\Omega)$ we have
\be C_{1}\|\mathcal{P}_{z}F\|_{L^{p^{\prime},\infty}(\mathfrak{X})}\leq\|F\|_{L^{p^{\prime}}(\Omega)}\leq C_{2}\|\mathcal{P}_{z}F\|_{L^{p^{\prime},\infty}(\mathfrak{X})}\label{pcasemain}.\ee
\end{Theorem}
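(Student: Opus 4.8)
The plan is to prove the two implications separately, with the converse (reconstruction) direction carrying essentially all the work. Throughout I write $w=\tfrac12+iz$, so that $\Re w=1/p$ when $z=\alpha+i\delta_{p'}$, and set $r=q^{w}$, hence $|r|=q^{1/p}$ and $q^{-w}=1/r$. For the easy direction, suppose $u=\mathcal{P}_zF$ with $F\in L^{p'}(\Omega)$. That $\mathcal{L}u=\gamma(z)u$ is immediate from Section 2, since $x\mapsto p^{1/2+iz}(x,\omega)$ is an eigenfunction of $\mathcal L$ with eigenvalue $\gamma(z)$. The membership $u\in L^{p',\infty}(\x)$ together with the left inequality $C_{1}\|\mathcal{P}_zF\|_{L^{p',\infty}}\le\|F\|_{L^{p'}}$ is precisely the boundedness of the Poisson transform $\mathcal{P}_z:L^{p'}(\Omega)\to L^{p',\infty}(\x)$ on the line $\Im z=\delta_{p'}$; this is a known estimate, following from the pointwise bounds on $\varphi_z$ recalled before Theorem~\ref{sfestimate} together with a Kunze--Stein type convolution argument (see \cite{CMS}).

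For the converse, assume $u\in C(\x)\cap L^{p',\infty}(\x)$ with $\mathcal{L}u=\gamma(z)u$. Since $\Im z=\delta_{p'}<0$ we have $z\neq(k\tau+i)/2$, so Theorem~\ref{bijection} produces a unique martingale $\mathbf F=(F_n)$ with $u=\mathcal{P}_z\mathbf F$; by the martingale criterion recalled in Section 2 it then suffices to prove $\sup_n\|F_n\|_{L^{p'}(\Omega)}<\infty$, for this yields an $F\in L^{p'}(\Omega)$ representing $\mathbf F$. The first step is an explicit recovery formula. Evaluating $u$ along the geodesic $o=\omega_0,\omega_1,\dots$ towards $\omega$ and using (\ref{poissonfull}) to split $\Omega$ according to the branch point $c(\omega_n,\cdot)$, one obtains
\[
u(\omega_n)=\sum_{k=0}^{n-1}r^{\,2k-n}(A_k-A_{k+1})+r^{\,n}A_n,\qquad A_k=\int_{E(\omega_k)}F_m\,d\nu ,
\]
where $A_k$ is independent of $m\ge k$ by the martingale property. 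Forming $u(\omega_{n+1})-q^{-w}u(\omega_n)$ telescopes all but one term and gives $u(\omega_{n+1})-q^{-w}u(\omega_n)=(q^{1+2iz}-1)\,r^{\,n-1}A_{n+1}$. Since $F_{n+1}$ is constant equal to $A_{n+1}/\nu(E(\omega_{n+1}))$ on $E(\omega_{n+1})$, and $\nu(E(x))=q/((q+1)q^{n+1})$ for $|x|=n+1$, a direct bookkeeping of the powers of $q$ (using $p'/p=p'-1$ and $|r|=q^{1/p}$) collapses to the clean identity
\[
\|F_{n+1}\|_{L^{p'}(\Omega)}^{p'}=C\sum_{|x|=n+1}|g(x)|^{p'},\qquad g(x):=u(x)-q^{-w}u(x^-),
\]
where $x^-$ is the parent of $x$ and $C=C(p,q,z)>0$.

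It remains to bound $\sum_{|x|=n}|g(x)|^{p'}$ uniformly in $n$, and here the eigenfunction equation and Lemma~\ref{lemmaweak} enter. Writing $\mathcal{L}u=\gamma(z)u$ at a vertex $x$ with $|x|\ge1$ (whose neighbours are $x^-$ and its $q$ children $x'$) and using $\gamma(z)=(r+q/r)/(q+1)$ yields, after cancellation, the transfer relation $\sum_{x'\ \mathrm{child\ of}\ x}g(x')=r\,g(x)$. Because $|r|^{p'}=q^{p'-1}$, the elementary inequality $|\sum_{i=1}^{q}a_i|^{p'}\le q^{p'-1}\sum_i|a_i|^{p'}$ applied to this relation gives $|g(x)|^{p'}\le\sum_{x'}|g(x')|^{p'}$, so $n\mapsto\sum_{|x|=n}|g(x)|^{p'}$ is non-decreasing; call its limit $M$. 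To see $M<\infty$, I estimate the Cesàro averages: from $|g(x)|^{p'}\le 2^{p'-1}(|u(x)|^{p'}+q^{-(p'-1)}|u(x^-)|^{p'})$, summing over $B(0,N)$ (each vertex being counted at most $q+1$ times as a parent) and applying Lemma~\ref{lemmaweak} twice gives $\tfrac1N\sum_{1\le|x|\le N}|g(x)|^{p'}\le C\|u\|_{L^{p',\infty}}^{p'}$. Since the level sums are non-decreasing, a bounded Cesàro average forces $M\le C\|u\|_{L^{p',\infty}}^{p'}$. Combining with the identity above yields $\sup_n\|F_n\|_{L^{p'}}^{p'}=CM<\infty$, which both produces $F\in L^{p'}(\Omega)$ with $u=\mathcal{P}_zF$ and delivers the right inequality $\|F\|_{L^{p'}}\le C_2\|\mathcal{P}_zF\|_{L^{p',\infty}}$.

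The main obstacle is precisely this uniform bound on $\sum_{|x|=n}|g(x)|^{p'}$: a naive triangle-inequality estimate of the recovery formula loses a factor of the level $n$, and only the transfer relation $\sum_{x'}g(x')=r\,g(x)$ (which encodes the eigenvalue equation) turns the level sums into a monotone sequence, so that the mere Cesàro bound supplied by Lemma~\ref{lemmaweak} can be upgraded to a genuine supremum bound. Everything else is routine bookkeeping with powers of $q$.
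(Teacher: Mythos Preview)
Your argument is correct, and it takes a genuinely different route from the paper for the converse direction. The paper proceeds operator-theoretically: it first establishes $\|F\|_{L^{p'}(\Omega)}\le C\|\mathcal P_zF\|_{L^{p',\infty}(\x)}$ by quoting \cite[Proposition~1]{KP}, then uses the Mantero--Zappa identity $\mathcal P_zF_n=\varepsilon_n u$ together with the domination $\varepsilon^\ast u\le C(\mathcal M u+|u|)$, where $\mathcal M$ is the ``square-root'' Hardy--Littlewood maximal operator. Boundedness of $\varepsilon^\ast$ on $L^{p',\infty}(\x)$ then comes from Veca's endpoint $L^{2,1}\to L^{2,\infty}$ bound for $\mathcal M$ interpolated against $L^\infty$. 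By contrast, you bypass all of \cite{KP}, Veca's maximal inequality and interpolation: from the exact recovery formula $g(x)=(r^2-1)r^{\,|x|-2}\!\int_{E(x)}\!F_m\,d\nu$ you obtain the clean identity $\|F_{n}\|_{L^{p'}}^{p'}=C\sum_{|x|=n}|g(x)|^{p'}$, and then the eigenvalue equation (via $\sum_{x'}g(x')=r\,g(x)$ with $|r|^{p'}=q^{p'-1}$) makes the level sums $\sum_{|x|=n}|g(x)|^{p'}$ non-decreasing, so the Ces\`aro bound from Lemma~\ref{lemmaweak} upgrades to a supremum bound. This is more self-contained and uses only Lemma~\ref{lemmaweak} and Theorem~\ref{bijection} from the paper; the price is that it is tailored to the exponent $p'$ on the critical line $\Im z=\delta_{p'}$ (the exact cancellation $|r|^{p'}=q^{p'-1}$ is what makes Jensen sharp), whereas the paper's maximal-function machinery is more flexible and feeds directly into the $L^2$ theorem that follows.

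Two small remarks for polish. First, the identity $\|F_{n}\|_{L^{p'}}^{p'}=C\sum_{|x|=n}|g(x)|^{p'}$ holds only for $n\ge1$, and the monotonicity of the level sums starts at $n=1$ (the transfer relation needs $|x|\ge1$); this is harmless since $\|F_0\|\le\|F_1\|$ by the contraction property of $\mathcal E_0$, but it is worth saying. Second, for the forward bound $\|\mathcal P_zF\|_{L^{p',\infty}}\le C\|F\|_{L^{p'}}$ you appeal to \cite{CMS}; the paper actually reproduces that argument via $|\mathcal P_zF(x)|\le C_pq^{-|x|/p'}\mathcal E(|F|)(\omega)$ and the strong $(p',p')$ bound for the martingale maximal function, so you might cite that pointwise estimate directly rather than a Kunze--Stein argument.
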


\begin{proof}
We first prove that if $z=\alpha+i\delta_{p^{\prime}},~\alpha\in\R$ ($1<p<2$) then for all $F\in L^{p^{\prime}}(\Omega),~\mathcal{P}_{z}F\in L^{p^{\prime},\infty}(\x)$ and
\be
\|\mathcal{P}_{z}F\|_{L^{p^{\prime},\infty}(\x)}\leq C_{p}\|F\|_{L^{p^{\prime}}(\Omega)}.\label{pcase}
\ee
The estimate (\ref{pcase}) is already available in the literature (see \cite{CMS}), however for the sake of completeness, we give the sketch of the proof.
Let $x\in\mathfrak{X}$ and $\lbrace{o=x_0,x_1, x_2,\ldots,x_n=x\rbrace}$ be the geodesic connecting $o$ to $x$. Then for $z=\alpha+i\delta_{p^{\prime}},~\alpha\in\R$  and $F\in L^{p^{\prime}}(\Omega),$ using (\ref{poissonfull}) we have
\begin{align*}
|\mathcal{P}_{z}F(x)|
&=\left | q^{-|x|(1/p+i\alpha)}\sum\limits_{j=0}^{|x|}q^{2j(1/p+i\alpha)}\int\limits_{E_{j}(x)\setminus E_{j+1}(x)}F(\omega)d\nu(\omega)\right |\\
&\leq q^{-|x|/p}\sum\limits_{j=0}^{|x|}q^{2j/p}\int\limits_{E_{j}(x)}|F(\omega)|d\nu(\omega)\\
&=q^{-|x|/p}\left[\frac{q}{q+1}\sum\limits_{j=1}^{|x|}q^{(2/p-1)j}\mathcal{E}_{j}(|F|)(\omega)+\mathcal{E}_{0}(|F|)(\omega)\right]\\
&\leq \mathcal{E}(|F|)(\omega)q^{-|x|/p}\sum\limits_{j=0}^{|x|}q^{(2/p-1)j} =\mathcal{E}(|F|)(\omega)q^{-|x|/p}\frac{q^{(2/p-1)(|x|+1)}-1}{q^{2/p-1}-1}\\
&\leq C_{p} q^{-|x|/p^{\prime}}\mathcal{E}(|F|)(\omega)\quad\forall x\in\mathfrak{X},~\omega\in E(x).
\end{align*}
For $\lambda>0$, define the set $E_{\lambda}=\{x\in\mathfrak{X}:|\mathcal{P}_{z}F(x)|>\lambda\}.$ It is easy to show that
$$\#E_{\lambda} \leq C_{p}\frac{{\|\mathcal{E}(|F|)\|}^{p^\prime}_{L^{p^{\prime}}(\Omega)}}{\lambda^{p^\prime}}.$$
Since the maximal function $\mathcal{E}$ is strong type $(p,p)$ for $1<p\leq\infty$, hence we get the inequality (\ref{pcase}). Now we will show that for all $F\in L^{p^{\prime}}(\Omega),$ there exists a constant $C$ (independent of $F$) such that
\begin{equation}\label{eq13}
\|F\|_{L^{p^{\prime}}(\Omega)}\leq C\|\mathcal{P}_{z}F\|_{L^{p^{\prime},\infty}(\mathfrak{X})}.
\end{equation}
 Since $\mathcal{P}_{z}F \in L^{p^\prime, \infty}(\x),$ therefore by Lemma \ref{lemmaweak} we have
 $$\frac{1}{N}\sum\limits_{x\in B(0,N)}|\mathcal{P}_{z}F(x)|^{p^{\prime}}\leq C\|\mathcal{P}_{z}F\|^{p^{\prime}}_{L^{p^{\prime},\infty}}.$$

 Let $\omega_0=\{o,\omega_1^0,\ldots, \omega^0_n\ldots\}$ be some fixed element in $\Omega$.
Then the above estimate and the fact $|\varphi_{z}(x)|\asymp q^{-|x|/p^{\prime}}$ for $z=\alpha+i\delta_{p^{\prime}},$ together implies that
 \[\frac{1}{N}\sum\limits_{n=0}^{N}\int\limits_{K}{\left|\frac{\mathcal{P}_{z}F(k\cdot\omega^{0}_{n})}{\varphi_{z}(\omega^{0}_{n})}\right|}^{p^{\prime}}dk\leq C\|\mathcal{P}_{z}F\|_{L^{p^{\prime},\infty}}^{p^{\prime}}\quad\text{for all }N\in\mathbb{N}.\]
 Now the equation (\ref{eq13}) follows from above inequality and \cite[Proposition 1 ]{KP}.

Conversely assume that $u\in \mathbb{E}_z(\x)$. From Theorem \ref{bijection} there exists a martingale $\mathbf F=\{F_{n}\}$ such that
$u(x)=\mathcal{P}_{z}\mathbf F(x)=\lim\limits_{n\rightarrow\infty}\mathcal{P}_{z}F_{n}(x)$.
We may assume that  $F_n\in L^{p^{\prime}}(\Omega)$ for each $n$ and hence from equation (\ref{eq13}),  we have
\begin{equation}
\|F_{n}\|_{L^{p^{\prime}}(\Omega)}\leq C\|\mathcal{P}_{z}F_{n}\|_{L^{p^{\prime},\infty}(\mathfrak{X})}.
\label{eq16}\end{equation}

For each $n,$ we define the operator $\varepsilon_{n}$  by
\[\varepsilon_{n}u(x)=\frac{1}{\#\mathcal{S}(n,x)}\sum\limits_{y\in\mathcal{S}(n,x)}u(y)\quad\text{for all }x\in\mathfrak{X}\]
where
\begin{equation*}
\mathcal{S}(n,x)= \begin{cases}
\vspace*{.2cm} \{x\}&\text{if }|x|\leq n\\
\vspace*{.2cm} \{y\in\mathfrak{X}: |y|=|x|, x_{n}=y_{n}\}&\text{if }|x|>n.\\
\end{cases}
\end{equation*}
It was  proved in \cite{MZ} that for all $n,$ the function $\mathcal{P}_{z}F_{n}$ is given by
\be \mathcal{P}_{z}F_{n}(x)=\varepsilon_{n}(u)(x)\quad\text{for all }x\in\x.\label{eq17}\ee
Let us define $\varepsilon^*u(x)=\sup\limits_n|\varepsilon_{n}u(x)|.$
Assume for a moment that the operator $\varepsilon^*$ is a bounded map from $L^{p^{\prime},\infty}(\x)$ into itself. Then ($\ref{eq16}$), (\ref{eq17}) and the fact that $u\in L^{p^{\prime},\infty}(\mathfrak{X})$ together implies
\[\|F_{n}\|_{L^{p^{\prime}}(\Omega)}\leq C\|u\|_{L^{p^{\prime},\infty} (\x)}\quad\forall n\in\mathbb{N}.\]
This proves that the martingale $\mathbf F$ is given by a $L^{p^{\prime}}(\Omega)$ function say $F$. Hence we have
\[u(x)=\mathcal{P}_{z}F(x).\]
To complete the proof, we will prove the boundedness of the  operator $\varepsilon^*$. Note that $\varepsilon_{n}u(x)=u(x)$ if $|x|\leq n$. Now assume that $|x|=n+k$ for some $k>0$ and let $\{o,x_1,\cdots,x_n,\cdots,x_{n+k}\}$ be the geodesic connecting $o$ to $x.$ Observe that  $\mathcal{S}(n,x)\subset B(x,2k)$  and

$$\#\mathcal{S}(n,x)=q^{k}\asymp \#B(x,2k)^{1/2}.$$
From the above facts we have
$$
|\varepsilon_{n}u(x)|\leq C\frac{1}{\#B(x,2k)^{1/2}}\sum\limits_{y\in B(x,2k)}|u(y)|\leq C \mathcal{M}u(x)
$$
where $\mathcal{M}$ is an operator defined by the formula
\[\mathcal{M}u(x)=\sup_{r\in\mathbb{N}}\frac{1}{\#B(x,r)^{1/2}}\sum\limits_{y\in B(x,r)}|u(y)|.\]
Finally we have
\begin{equation}\label{eq15}
\varepsilon^*u(x)\leq C(\mathcal{M}u(x)+|u(x)|)\quad\text{for all} \quad x\in\mathfrak{\x}.
\end{equation}

Veca proved (see \cite{V}) that $\mathcal{M}$ is a bounded operator from $L^{2,1}(\x)$ to $L^{2,\infty}(\x).$ Hence from ($\ref{eq15}$) we can say that $\varepsilon^*$ is bounded from $L^{2,1}(\x)$ to $L^{2,\infty}(\x).$ It follows from the definition that $\varepsilon^*$ is bounded from $L^{\infty}(\x)$ to $L^{\infty}(\mathfrak{X}).$ Using interpolation results, we conclude that $\varepsilon^*$ is a bounded map from $L^{p^{\prime},\infty}(\x)$ into itself.
This completes the proof.
\end{proof}

\begin{Theorem}
Let $u\in C(\x)$ and $z\in\R\setminus(\tau/2)\Z$. Then  $u(x)=\mathcal{P}_{z}F(x)$ for some $F\in L^{2}(\Omega)$ if and only if
 $u\in L^{2,\infty}(\x)$ and $\mathcal Lu(x)=\gamma(z)u(x)$.  Moreover there exists a positive constant $C_z$ such that for all $F\in L^{2}(\Omega)$
\be
\|\mathcal{P}_{z}F\|_{L^{2,\infty}(\x)}\leq C_z\|F\|_{L^2(\Omega)}.
\label{2case}\ee
\end{Theorem}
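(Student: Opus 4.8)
The plan is to prove the equivalence together with the bound (\ref{2case}), treating the two implications separately. The forward implication is immediate: if $u=\mathcal{P}_zF$ with $F\in L^2(\Omega)$, then $u$ is a Poisson transform, hence $\mathcal{L}u=\gamma(z)u$ by the discussion preceding the definition of $\mathbb{E}_z(\x)$, while $u\in L^{2,\infty}(\x)$ is the content of (\ref{2case}). So the real work is to prove (\ref{2case}) and, conversely, that an $L^{2,\infty}$ eigenfunction is the Poisson transform of an $L^2$ boundary function. Throughout I write $z=\alpha$ with $\alpha\in\R\setminus(\tau/2)\Z$, so that $q^{2i\alpha}\neq 1$; this \emph{non-resonance} is exactly what separates the present case from the resonant points $z\in(\tau/2)\Z$ at which $\varphi_z\notin L^{2,\infty}$.

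For the norm estimate I would start from the same expansion as in (\ref{pcase}). Using (\ref{poissonfull}) and writing $a_j=\int_{E_j(x)\setminus E_{j+1}(x)}F\,d\nu$ and $b_j=\int_{E_j(x)}F\,d\nu=\nu(E_j(x))\,\mathcal{E}_j(F)(\omega)$ for $\omega\in E(x)$, one gets $|\mathcal{P}_zF(x)|=q^{-|x|/2}\big|\sum_{j=0}^{|x|}q^{j(1+2i\alpha)}a_j\big|$. At $p=2$ the factor $q^{j(1+2i\alpha)}$ no longer produces the convergent geometric series that made the $1<p<2$ argument work; instead a summation by parts turns the sum into $\sum_j q^{2ij\alpha}\mathcal{E}_j(F)(\omega)$ up to $z$-dependent constants, and a \emph{second} summation by parts, using that the geometric partial sums $S_j=\sum_{m\le j}q^{2im\alpha}$ are \emph{uniformly bounded} precisely because $q^{2i\alpha}\neq1$, reduces matters to $S_{|x|}\mathcal{E}_{|x|}(F)(\omega)-\sum_{j<|x|}S_j\Delta_{j+1}(F)(\omega)$. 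By the orthogonality (\ref{ortho}) the second term is a martingale transform with bounded multipliers, so its maximal function $T$ and the maximal conditional expectation $\mathcal{E}$ are both bounded on $L^2(\Omega)$. This yields
\[|\mathcal{P}_zF(x)|\le C_z\,q^{-|x|/2}\,g(\omega),\qquad \omega\in E(x),\quad g:=\mathcal{E}(|F|)+T F,\ \ \|g\|_{L^2(\Omega)}\le C_z\|F\|_{L^2(\Omega)}.\]
From here (\ref{2case}) follows by the same distribution-function computation as for (\ref{pcase}): the sets $E(x)$ with $|x|=n$ are disjoint with $\nu(E(x))\asymp q^{-n}$, so $\{x\in E_\lambda:|x|=n\}$ injects into the super-level set $\{g>cq^{n/2}\lambda\}$, and summing over $n$ (a layer-cake in which the weight $q^n$ and the threshold $q^{n/2}$ combine to reproduce $\|g\|_{L^2}^2$) gives $\#E_\lambda\le C_z\|F\|_{L^2}^2/\lambda^2$.

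For the converse, let $u\in C(\x)\cap L^{2,\infty}(\x)$ satisfy $\mathcal{L}u=\gamma(z)u$. Since a real $z$ is never of the form $(k\tau+i)/2$, Theorem \ref{bijection} produces a martingale $\mathbf{F}=\{F_n\}$ with $u=\mathcal{P}_z\mathbf{F}$, and by (\ref{eq17}) one has $\mathcal{P}_zF_n=\varepsilon_n u$ with $F_n\in\mathcal{K}_n(\Omega)\subset L^2(\Omega)$. I will bound $\|F_n\|_{L^2(\Omega)}$ uniformly in $n$, so that the martingale convergence recalled in Section 2 gives an $F\in L^2(\Omega)$ with $u=\mathcal{P}_zF$. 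Two estimates are needed. First, a \emph{reverse inequality} $\|F_n\|_{L^2(\Omega)}\le C_z\|\mathcal{P}_zF_n\|_{L^{2,\infty}(\x)}$, obtained from Lemma \ref{lemmaweak} together with the $L^2$ boundary-recovery estimate for the Poisson transform along the $K$-orbits $\{k\cdot\omega^0_n\}$ (the $p=2$ analogue of \cite[Proposition 1]{KP}); because $\varphi_z$ oscillates and vanishes for real $z$, one cannot normalize by it as in the previous theorem, and instead uses the averaged lower bound for $\int_K|\mathcal{P}_zF(k\cdot\omega^0_n)|^2\,dk$, valid exactly because $\alpha\notin(\tau/2)\Z$. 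Second, the \emph{uniform} bound $\|\varepsilon_n u\|_{L^{2,\infty}(\x)}\le C\|u\|_{L^{2,\infty}(\x)}$: here I observe that $\varepsilon_n$ is dominated pointwise by the conditional expectation $\mathcal{Q}_n$ on $(\x,\#)$ associated with the partition into the finite blocks $\mathcal{S}(n,x)$, namely $|\varepsilon_n u|\le\mathcal{Q}_n(|u|)$, and that $\mathcal{Q}_n$ is a simultaneous contraction on $L^1(\x)$ and $L^\infty(\x)$; by Calderón's theorem it is therefore bounded on $L^{2,\infty}(\x)$ with constant independent of $n$. Combining the two gives $\sup_n\|F_n\|_{L^2(\Omega)}\le C_z\|u\|_{L^{2,\infty}(\x)}<\infty$, as required.

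The main obstacle is the endpoint nature of $p=2$, which blocks the $1<p<2$ route in two places. In the forward estimate there is no geometric collapse, so one must extract cancellation from the oscillation $q^{2ij\alpha}$ and pay for it with the orthogonality (\ref{ortho}) of the martingale differences; this is where $z\notin(\tau/2)\Z$ enters decisively. In the converse, the operators $\mathcal{M}$ and $\varepsilon^*$ of the previous proof are only bounded from $L^{2,1}$ to $L^{2,\infty}$ (\cite{V}), and the interpolation that upgraded this to $L^{p',\infty}\to L^{p',\infty}$ for $p'>2$ is unavailable at $p'=2$; replacing $\varepsilon^*$ by a single $\varepsilon_n$ controlled through the conditional-expectation/Calderón argument is the device that circumvents this. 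I expect the genuinely delicate step to be the reverse inequality: establishing the averaged $L^2$ lower bound for the Poisson transform along $K$-orbits without the pointwise normalization by $\varphi_z$ is the heart of the matter, and it is precisely there that the exclusion of the resonant set $(\tau/2)\Z$ is used.
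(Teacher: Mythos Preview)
Your forward estimate is essentially the paper's: the authors also rewrite $\mathcal{P}_zF(x)$ in terms of $\mathcal{E}_j(F)$, substitute $\mathcal{E}_j=\sum_{m\le j}\Delta_m$, and change the order of summation to arrive at a bound $|\mathcal{P}_zF(x)|\le q^{-|x|/2}\bigl(C_1\mathcal{E}(|F|)(\omega)+C_2\sup_n\bigl|\sum_{m\le n}q^{ism}\Delta_m(F)(\omega)\bigr|\bigr)$, then control the second term via the $L^2$ martingale transform $G=\sum_m q^{ism}\Delta_m(F)$ (citing Stein). Your double Abel summation with the bounded partial sums $S_j$ is a repackaging of exactly this computation.

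For the converse your two-step plan (reverse inequality $\|F_n\|_{L^2}\le C_z\|\mathcal{P}_zF_n\|_{L^{2,\infty}(\x)}$, then $\|\varepsilon_n u\|_{L^{2,\infty}}\le C\|u\|_{L^{2,\infty}}$ via the conditional-expectation/Calder\'on device) is valid, but the paper takes a shorter route that eliminates the second step. It simply observes that $\mathcal{P}_zF_N(x)=u(x)$ for $|x|\le N$ (since $p^{1/2+iz}(x,\cdot)\in\mathcal{K}_N(\Omega)$), so Lemma~\ref{lemmaweak} applied directly to $u$ already gives $\tfrac{1}{N}\sum_{x\in B(0,N)}|\mathcal{P}_zF_N(x)|^2\le C\|u\|_{L^{2,\infty}}^2$; there is no need to pass through $\varepsilon_n$ or to bound $\|\mathcal{P}_zF_n\|_{L^{2,\infty}}$ in terms of $\|u\|_{L^{2,\infty}}$. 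The hard averaged lower bound you correctly single out is supplied by the Mantero--Zappa formula $\mathcal{P}_z(\Delta_nF_N)(k\cdot\omega^0_m)=B(n,m,z)\,\Delta_nF_N(k\cdot\omega_0)$ together with (\ref{ortho}): writing $B(n,m,z)=q^{-m/2}B'(n,m,z)$, the paper computes explicitly that $\tfrac{1}{N}\sum_{m=n}^N|B'(n,m,z)|^2\to 2|\mathbf{c}(s)|^2$ (with $z=s/2$), which is precisely where $z\notin(\tau/2)\Z$ enters. So your diagnosis of the obstacle and of where non-resonance is used is right; the paper just avoids the $\varepsilon_n$ detour by matching the ball radius to the martingale level.
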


\begin{proof}
First we prove that if $F\in L^{2}(\Omega)$ then $\mathcal{P}_{z}F\in L^{2,\infty}(\x)$ and the inequality (\ref{2case}) holds.
We assume that $z=s/2$ where $0<|s|<\tau$.  Let $x\in\x$ and $\{o=x_0,x_1, \ldots ,x_n=x\}$ be the geodesic connecting $o$ to $x$. We expand $\mathcal{P}_{z}F$ in terms of
  $\mathcal{E}_j (F)$ and further replace it by $\sum\limits_{m=0}^{j}\Delta_m (F)$. By adopting the similar approach given in \cite{SP} we have,
\begin{align*}
\mathcal{P}_z F(x)&=\int\limits_{\Omega}p^{(1+is)/2}(x,\omega)F(\omega)d\nu(\omega)
=q^{-|x|(1+is)/2}\sum\limits_{j=0}^{|x|}q^{(1+is)j}\int\limits_{{E_j(x)}\setminus E_{j+1}(x)}F(\omega)d\nu(\omega)\\
&=q^{-|x|(1+is)/2}\left(\mathcal{E}_{0}(F)(\omega)+\sum\limits_{j=1}^{|x|}q^{(1+is)j}\int\limits_{E_j(x)}F(\omega)d\nu(\omega)
-\sum\limits_{j=0}^{|x|-1}q^{(1+is)j}\int\limits_{E_{j+1}(x)}F(\omega)d\nu(\omega)\right)\\%
&=q^{-|x|(1+is)/2}\left(\mathcal{E}_0 (F)(\omega)+\frac{q}{q+1}(1-q^{-1-is})\sum\limits_{j=1}^{|x|}q^{isj}\mathcal{E}_j (F)(\omega)\right)\\
&=q^{-|x|(1+is)/2}\left(\Delta_0 (F)(\omega)+\frac{q}{q+1}(1-q^{-1-is})\sum\limits_{j=1}^{|x|}q^{isj}\sum\limits_{m=0}^{j}\Delta_m (F)(\omega)\right)
\end{align*}
Now changing the order of summation, we get
\begin{align*}
\mathcal{P}_z F(x)&=q^{-|x|(1+is)/2}\Bigg[\Delta_0 (F)(\omega)+\left.\frac{q}{q+1}(1-q^{-1-is})\Bigg(\Delta_0 (F)(\omega)\sum\limits_{j=1}^{|x|}q^{isj}+\sum\limits_{m=1}^{|x|}\Delta_m (F)(\omega)\sum\limits_{j=m}^{|x|}q^{isj}\Bigg)\right]\\
&=q^{-|x|(1+is)/2}\left[\Bigg(1+\frac{1-q^{is|x|}}{1-q^{is}}\frac{q^{1+is}}{q+1}(1-q^{-1-is})\Bigg)\Delta_0 (F)(\omega)\right.\\&\hspace*{5cm}+\left.\frac{q}{q+1}(1-q^{-1-is})\sum\limits_{m=1}^{|x|}q^{ism}\frac{1-q^{is(|x|-m+1)}}{1-q^{is}}\Delta_m (F)(\omega)\right]\\
&=q^{-|x|(1+is)/2}\left[\Bigg(1+\frac{(1-q^{-1-is})}{1-q^{is}}\frac{q^{1+is}}{q+1}\Bigg)\mathcal{E}_0 (F)(\omega)\right.\\&\hspace*{4cm}-\left.\frac{q(1-q^{-1-is})}{(q+1)(1-q^{is})}\left(q^{is(|x|+1)}\mathcal{E}_{|x|} (F)(\omega)-\sum\limits_{m=1}^{|x|}q^{ism}\Delta_m (F)(\omega)\right)\right]\\
\end{align*}
Finally there exists constants $C_1$ and $C_2$ such that
$$|\mathcal{P}_z F(x)|
\leq q^{-|x|/2}\left(C_1\mathcal{E}(|F|)(\omega)+C_2\sup\limits_{|x|\in\N}\left|\sum\limits_{m=1}^{|x|}q^{ism}\Delta_m (F)(\omega)\right|\right)\quad\forall x\in\x, \omega\in E(x).
$$
Thus $|\mathcal{P}_z F(x)|$ is dominated by a sum of two factors $q^{-|x|/2}\mathcal{E}(|F|)$ and $q^{-|x|/2}\mathcal{E}(G)$ respectively, where the function $G$ is defined by  $G(\omega)=\sum\limits_{m=1}^{\infty}q^{ism}\Delta_m (F)(\omega)$. Clearly $G$ is a well defined $L^2$ function and $\|G\|_{L^2(\Omega)}\leq C\|F\|_{L^2(\Omega)}$ (see \cite[Theorem 7, page 95]{ST}).  Since $\mathcal E$ is a strong type $(2,2)$ operator, therefore by using similar argument as in the previous theorem we get the desired inequality (\ref{2case}).

Conversely, assume that $u\in \mathbb{E}_z(\x)$  for some $z=s,~0<|s|<\tau/2$.
By Theorem \ref{bijection} there exists a martingale ${\bf F}=(F_n)$ such that $u(x)=\mathcal{P}_{z}\mathbf F(x)$. Note that
$u(x)=\mathcal{P}_{z}F_{N}(x)$ whenever $|x|\leq N.$ To complete the proof we need to show that $\mathbf F$ is in $L^2(\Omega)$. For this it is enough to show that $\sup\limits_{n}\|F_{n}\|_{L^{2}(\Omega)}<\infty.$ To do so, we will use the given assumption that $u\in L^{2,\infty}(\x)$.
It follows from Lemma \ref{lemmaweak} that for all $N\in \N$ we have
\begin{align}
\frac{1}{N}\sum\limits_{m=0}^{N}q^{m}\int\limits_{K}|\mathcal{P}_{z}F_{N}(k\cdot\omega^{0}_{m})|^{2}dk &\asymp\frac{1}{N}\sum\limits_{m=0}^{N}\sum\limits_{x\in S(0,m)}|\mathcal{P}_{z}F_{N}(x)|^{2}=\frac{1}{N}\sum\limits_{x\in B(0,N)}|\mathcal{P}_{z}F_{N}(x)|^{2}\nonumber\\
&=\frac{1}{N}\sum\limits_{x\in B(0,N)}|u(x)|^{2} \leq C\|u\|^{2}_{L^{2,\infty}},\label{eq18}
\end{align}
where $\omega_{0}=\{o,\omega^{0}_{1},\ldots,\omega^{0}_{n},\ldots\}$ is some fixed element in $\Omega.$ Recall that $F_{N}=\sum\limits_{n=0}^{N}\Delta_{n}F_{N}$ with $\Delta_{n}(\Delta_{n}F_{N})=\Delta_{n}F_{N}$. We need the following formula for $\mathcal{P}_{z}(\Delta_{n}F_{N})$ which directly follows from the calculation given in the \cite[p. 377]{MZ}.
\begin{equation*}
\mathcal{P}_{z}(\Delta_{n}F_{N})(k\cdot\omega_m^0)=\begin{cases}
0&\mbox{if }m<n,\\
B(n,m,z)\Delta_{n}F_{N}(k\cdot\omega_0)&\mbox{if }m\geq n,
\end{cases}
\end{equation*}
where $B(n,m,z)$ is defined by the rule
\begin{equation*}
B(n,m,z)=\begin{cases}
q^{-m/2}(q^{-ism}+{\bf c}(s)(q^{ism}-q^{-ism}))&\mbox{if }n=0,\\
q^{-m/2}{\bf c}(s)q^{is(n-1)}(q^{is(m-n+1)}-q^{-is(m-n+1)})&\mbox{if }n>0.
\end{cases}
\end{equation*}
Hence for every $m$ with $0\leq m\leq N$,
\begin{equation*}
\mathcal{P}_{z}F_{N}(k\cdot\omega_m^0)=\sum\limits_{n=0}^mB(n,m,z)\Delta_nF_N(k\cdot\omega_0).
\end{equation*}
By using above formula and (\ref{ortho}), we have
\begin{align*}
&\frac{1}{N}\sum\limits_{m=0}^{N}q^{m}\int\limits_{K}|\mathcal{P}_{z}F_{N}(k\cdot\omega^{0}_{m})|^{2}dk=\frac{1}{N}
\sum\limits_{m=0}^{N}q^{m}\int\limits_{K}\mathcal{P}_{z}F_{N}(k\cdot\omega^{0}_{m})\overline{\mathcal{P}_{z}F_{N}(k\cdot\omega^{0}_{m})} dk\\
=&\frac{1}{N}\sum\limits_{m=0}^{N}q^{m}\int\limits_{K}\left(\sum\limits_{j=0}^mB(j,m,z)\Delta_jF_N(k\cdot\omega_0)\right)
\left(\overline{\sum\limits_{n=0}^mB(n,m,z)\Delta_nF_N(k\cdot\omega_0)}\right) dk\\
=&\frac{1}{N}\sum\limits_{m=0}^Nq^m\sum\limits_{n=0}^m|B(n,m,z)|^2\|\Delta_nF_N\|_{L^2(\Omega)}^2\\
=&\frac{1}{N}\sum\limits_{n=0}^N\|\Delta_nF_N\|_{L^2(\Omega)}^2\sum\limits_{m=n}^Nq^m|B(n,m,z)|^2.
\end{align*}
From (\ref{eq17}), we have
\begin{equation}\label{eq19}
\sum\limits_{n=0}^N\|\Delta_nF_N\|_{L^2(\Omega)}^2\left(\frac{1}{N}\sum\limits_{m=n}^N|B^\prime(n,m,z)|^2\right)\leq C\|u\|^{2}_{L^{2,\infty}}\quad\text{for all }N\in\N,
\end{equation}
where $B(n,m,z)=q^{-m/2}B^\prime(n,m,z)$. We claim that for all $n\geq 0,$ $\frac{1}{N}\sum\limits_{m=n}^N|B^\prime(n,m,z)|^2\rightarrow2|{\bf c}(s)|^2$ as $N\rightarrow\infty$. The case $n=0$ follows from the fact that ${\bf c}(s)+\overline{{\bf c}(s)}=1$ and the  following explicit expression of $\sum\limits_{m=0}^N|B^\prime(0,m,z)|^2;$
\begin{multline*}
\sum\limits_{m=0}^N|B^\prime(0,m,z)|^2=(N+1)+|{\bf c}(s)|^2\left(2N-q^{2is}\frac{1-q^{2isN}}{1-q^{2is}}-q^{-2is}\frac{1-q^{-2isN}}{1-q^{-2is}}\right)\\+{\bf c}(s)\left(q^{2is}\frac{1-q^{2isN}}{1-q^{2is}}-N\right)+\overline{{\bf c}(s)}\left(q^{-2is}\frac{1-q^{-2isN}}{1-q^{-2is}}-N\right),
\end{multline*}
whereas the case $n>0$ follows from the explicit expression of $\sum\limits_{m=n}^N|B^\prime(n,m,z)|^2$ given below
\begin{multline*}
\sum\limits_{m=n}^N|B^\prime(n,m,z)|^2=|{\bf c}(s)|^2\left(2(N-n+1)-\frac{q^{2is}}{1-q^{2is}}(1-q^{2is(N-n+1)})
\right.\\\left.-\frac{q^{-2is}}{1-q^{-2is}}(1-q^{-2is(N-n+1)})\right).
\end{multline*}
This proves our claim. Hence for any fixed $k\in\mathbb{N}$ there exists a large positive integer (say)  $N_{k}$ greater than $k$  such that
\begin{equation}\label{claim}
\frac{1}{N_{k}}\sum\limits_{m=n}^{N_{k}}|B^\prime(n,m,z)|^2>|{\bf c}(s)|^{2},\mbox{ for all }n\mbox{ with }0\leq n\leq k.
\end{equation}
Since $k< N_k$ therefore by using (\ref{eq19}), (\ref{claim}) and (\ref{ortho}) we have
$$\|F_k\|_{L^2(\Omega)}^2\leq C_z\|u\|_{L^{2,\infty}(\mathfrak{X})}^2.$$

Since $k$ is arbitrary, therefore $\mathbf F$ coincides with some $F\in L^{2}(\Omega)$ and
$ u(x)=\mathcal{P}_{z}F(x).$
This completes the proof.
\end{proof}

\section{Proof of the Theorem A and Theorem B}

\noindent{\bf Proof of Theorem A:}(i) First we assume $p=1$ so that $f\in L^1(\x)$. For $r<\infty$ and $z=\alpha+i\delta_{r^\prime}$, by Minkowski's inequality we have

$$\left(\int\limits_{\Omega}|\tilde{f}(z,\omega)|^{r}d\nu(\omega)\right)^{1/r}\leq\sum\limits_{x\in\x}|f(x)|\left(\int_\Omega p(x,\omega)d\nu(\omega)\right)^{1/r}
\leq \|f\|_{L^1(\x)}.$$

If $r=\infty$ and $z=\alpha+i\delta_1$ then  $\tilde{f}(z,\omega)=\sum\limits_{x\in\x}f(x)p^{i\alpha}(x,\omega)$. Since $|p^{i\alpha}(x,\omega)|\leq1$, hence for $p=1$ the result follows with $C_{p,r}=1$.

Now we discuss the case when $r=p$ or $p^{\prime}$. Our proof is based on the following duality relation and the norm estimates of Poisson transform.
For a finitely supported function $f$ on $\x$ and $F\in C(\Omega)$ we have,
\begin{align}
\int\limits_{\Omega}\tilde{f}(z,\omega)F(\omega)d\nu(\omega)&=\int\limits_{\Omega}\left(\sum\limits_{x\in\mathfrak{X}}f(x)p^{1/2+iz}(x,\omega)\right)F(\omega)d\nu(\omega)\nonumber\\
&=\sum\limits_{x\in\mathfrak{X}}f(x)\left(\int_\Omega p^{1/2+iz}(x,\omega)F(\omega)d\nu(\omega)\right)\nonumber\\
&=\sum\limits_{x\in\mathfrak{X}}f(x)\mathcal{P}_{z}F(x).\label{duality}
\end{align}

Let $r=p$ and $z=\alpha+i\delta_{p^{\prime}},~\alpha\in\R$.
In view of (\ref{duality}), using the duality argument and the estimate (\ref{pcase}) we have

$$\left(\int\limits_{\Omega}|\widetilde{f}(z,\omega)|^p d\nu(\omega)\right)^{1/p}\leq C_{p}\|f\|_{L^{p,1}(\mathfrak{X})}.$$
Next we consider $r=p^{\prime}$ and $z=\alpha+i\delta_{p},~\alpha\in\R$. By the similar duality argument as above, it is enough to show that
\be \|\mathcal{P}_z F\|_{L^{p^{\prime},\infty}(\x)}\leq C_{p}\|F\|_{L^{p}(\Omega)} \;\; \forall F\in L^{p}(\Omega).\label{p'case}\ee
This estimate is already mentioned in \cite{CMS}. The proof of the above estimate is based on the following facts.

\begin{itemize}
\item  For all $x\in\x$, we have
$\mathcal{P}_{i\delta_{p}}F(x)=\mathcal{P}_{i\delta_{p^{\prime}}}{\mathcal{P}_{i\delta_{p^{\prime}}}}^{-1}\mathcal{P}_{i\delta_{p}}F(x)=\mathcal{P}_
{i\delta_{p^{\prime}}}\left(\mathcal{I}_{i\delta_{p^{\prime}}}F\right)(x),$ where $\mathcal{I}_{i\delta_{p^{\prime}}}$ is the intertwining operator.

\item The intertwining operator $\mathcal{I}_{i\delta_{p^{\prime}}}$ is bounded from $L^p(\Omega)$ to $L^{p^{\prime}}(\Omega)$ (For details about intertwining operators, we refer  \cite{MZ}).
\end{itemize}

Finally we will discuss the case  $1<p<r<p^{\prime}<\infty$. If $r<2$ and $z=\alpha+i\delta_{r^{\prime}},~\alpha\in\R$ then by (\ref{pcase}) we have
$$\|\mathcal{P}_z F\|_{L^{r^{\prime},\infty}(\x)}\leq C_{r}\|F\|_{L^{r^{\prime}}(\Omega)} \;\; \forall F\in L^{r^{\prime}}(\Omega).$$

If $r>2$ and $z=\alpha+i\delta_{r^{\prime}},~\alpha\in\R$ then by (\ref{p'case}) we have
$$\|\mathcal{P}_z F\|_{L^{r,\infty}(\x)}\leq C_{r}\|F\|_{L^{r^{\prime}}(\Omega)} \;\; \forall F\in L^{r^{\prime}}(\Omega).$$
Now by interpolation between (\ref{bounded poisson}) and the above two inequalities, we have
$$\|\mathcal{P}_z F\|_{L^{p^{\prime}}(\x)}\leq C_{p,r}\|F\|_{L^{r^{\prime}}(\Omega)}$$
In view of (\ref{duality}) if $z=\alpha+i\delta_{r^{\prime}},~\alpha\in\R$ then
 $$\left(\int\limits_{\Omega}|\widetilde{f}(z,\omega)|^r d\nu(\omega)\right)^{1/r}\leq C_{p,r}\|f\|_{L^{p}(\mathfrak{X})}.$$
This completes the proof.\hfill\qed

Now we prove the $L^2$ case of restriction theorem. Like $p$ case, proof of this theorem is also based on the duality argument and estimate of the Poisson transform.
The key difference in the statement is that $L^2$ restriction theorem does not hold for all $z\in\R$. This can be shown by a simple argument as $\varphi_0$ does not belong
 to $L^{2,\infty}(\x)$. However, $L^p$ restriction holds for all $z$ on the lines $\Im z=\delta_p$ and $\Im z=\delta_{p^\prime}$ if $p\neq 2$.

\noindent{\bf Proof of Theorem B:} Proof of this theorem is a repetition of the argument used in the last theorem. We know from (\ref{duality}) that
for all finitely supported function $f$ on $\x$ and $F\in C(\Omega)$
$$\int\limits_{\Omega}\tilde{f}(z,\omega)F(\omega)d\nu(\omega)=\sum\limits_{x\in\mathfrak{X}}f(x)\mathcal{P}_{z}F(x).$$ Now the result will follow from above equation and (\ref{2case}). This completes the proof.\hfill\qed


 
\end{document}